\newtheorem{theorem}{Theorem}
\newtheorem{lemma}{Lemma}
\newtheorem{remark}{Remark}
\newenvironment{proof}{\begin{trivlist}\item[]{\emph{Proof.}}}
               {\hfill$\Box$\end{trivlist}}
\def\bfx{{\bf x}}
\def\bfb{{\bf b}}
\def\bfE{{\bf E}}
\def\bfa{{\bf a}}
\def\bfr{{\bf r}}
\begin{document}
\title{A Quasi-Orthogonal Matching Pursuit Algorithm\\ for Compressive Sensing}
\author{Ming-Jun Lai\footnote{mjlai@uga.edu. This author is associated with Department of Mathematics, University of
Georgia, Athens, GA 30602, U.S.A.} \and Zhaiming Shen\footnote{Zhaiming.Shen@uga.edu. 
This author is associated with Department of Mathematics, University of
Georgia, Athens, GA 30602, U.S.A.}}
\maketitle

\begin{abstract}
\noindent
In this paper, we propose a new orthogonal matching pursuit algorithm called quasi-OMP 
algorithm which greatly enhances the performance of classical orthogonal matching pursuit 
(OMP) algorithm, at some cost of computational complexity.
We are able to show that under some sufficient conditions of mutual coherence 
of the sensing matrix, 
the QOMP Algorithm succeeds in recovering the $s$-sparse signal vector $\mathbf{x}$ within $s$ 
iterations where a total number of $2s$ columns are selected under the both noiseless and noisy settings. In addition, we
show that for Gaussian sensing matrix, the norm of the residual of each iteration will go to zero linearly depends 
on the size of the matrix with high probability. The numerical experiments are 
demonstrated to show the effectiveness of QOMP algorithm in recovering sparse 
solutions which outperforms the classic OMP and GOMP algorithm.
\end{abstract}

\section{Introduction}
The problem we discuss in this paper is the following:
Suppose we are given the sensing matrix $\mathbf{\mathbf{\Phi}}
\in\mathbb{R}^{m\times n}$,  the observed 
measurement vector $\mathbf{b}$, and the sparsity $s$ of the input signal vector $\mathbf{x}$, 
let $S$ denote the support of $\mathbf{x}$. Given $\mathbf{\mathbf{\Phi}}$, $\mathbf{b}$ and $s$, in the 
noiseless case and noisy case, we would like to recover the original input signal vector $\mathbf{x}$ through 
the equation $\mathbf{\mathbf{\Phi}}\mathbf{x}=\mathbf{b}$. This is a typical problem in the study of
compressive sensing. That is, we solve 
\begin{equation}
\label{CS}
\min_{\bfx  \in \mathbb{R}^n} \{\|\mathbf{\mathbf{\Phi}}\mathbf{x}- \bfb\|_2: \quad \|\bfx\|_0\le s\},
\end{equation}
where $\|\bfx\|_0$ stands for the number of nonzero entries of $\bfx$. 
A commonly useful concept which plays a key role 
in the study of the existence and uniqueness of a sparse solution from a sensing matrix 
 is called \emph{Restricted Isometry Constant} (RIC), which is defined as, for a sensing matrix 
$\mathbf{\mathbf{\Phi}}\in\mathbb{R}^{m\times n}$ and an integer $1\leq s\leq n$, the smallest constant 
$\delta_s\in (0,1)$ such that 
\begin{equation} \label{eqn:RIP}
(1-\delta_s)\|\mathbf{x}\|^2_2\leq \|\mathbf{{\Phi} x}\|^2_2\leq (1+\delta_s)\|\mathbf{x}\|^2_2
\end{equation}
for all $s$-sparse signals $\mathbf{x}$. If a matrix $\mathbf{\Phi}$ satisfies (\ref{eqn:RIP}), then we say $\mathbf{\Phi}$ is of \emph{Restricted Isometry Property} (RIP) of order $s$ with restricted isometry constant $\delta_s$. 
\\
Another useful concept in compressive sensing study
is the \emph{mutual coherence}, which characterizes the spread 
of the columns of $\mathbf{\Phi}$. The mutual coherence of a sensing matrix $\mathbf{\Phi}$ is defined as
\begin{equation}
\mu(\mathbf{\Phi}):=\max_{1\leq i,j\leq n, i\neq j}
\frac{|\mathbold{\mathbold{\phi}}_i^\top\mathbold{\mathbold{\phi}}_j|}{\|\mathbold{\mathbold{\phi}}_i\|_2\cdot \|\mathbold{\mathbold{\phi}}_j\|_2}.
\end{equation}  
Let $\mathbold{\phi}_i$ denote the column of $\mathbf{\Phi}$. If each $\mathbold{\phi}_i$ is normalized, then we have 
$\mu(\mathbf{{\Phi}})=\max_{1\leq i,j\leq m, i\neq j}|\mathbold{\mathbold{\phi}}_i^\top\mathbold{\mathbold{\phi}}_j|$.
\\
In compressive sensing study, the OMP algorithm is one of the most important approaches.  
The main idea of the OMP algorithm, which is also called orthogonal greedy 
approach, can be 
explained as follows. We can be greedy enough to set $s=1$ in (\ref{CS}). Then the minimization
in (\ref{CS}) becomes
\begin{equation}
\label{laisidea}
\min_{i, c} \|c \mathbold{\mathbold{\phi}}_i -\bfb\| =\min_i \min_c \|c \mathbold{\mathbold{\phi}}_i -\bfb\|,
\end{equation}
where $\mathbf{\Phi}=[\mathbold{\mathbold{\phi}}_1, \mathbold{\mathbold{\phi}}_2, \cdots, \mathbold{\mathbold{\phi}}_n]$. 
The inner minimization problem is easy to solve 
and under the assumption that $\|\mathbold{\mathbold{\phi}}_i\|=1$, we know $c= \langle \bfb, 
\mathbold{\mathbold{\phi}}_i\rangle$. That is, the residual value
\begin{equation}
\label{laisidea2}
\min_{i, c} \|c \mathbold{\mathbold{\phi}}_i -\bfb\|^2 = \min_i \|\bfb\|^2 - |\langle \bfb, \mathbold{\mathbold{\phi}}_i\rangle|^2
\end{equation} 
will be minimized if the index $i$ is chosen such that $ |\langle \bf{b}, \mathbold{\mathbold{\phi}}_i\rangle| 
= \max_{j} |\langle \bfb, \mathbold{\mathbold{\phi}}_j\rangle|$. Certainly, we then repeat the previous procedure
by letting $\bfr_1=\bfb- \langle \bfb, \mathbold{\mathbold{\phi}}_i\rangle \mathbold{\mathbold{\phi}}_i$ be the new 
residual vector and computing the next index $i_1$ 
so that $|\langle \bfb_1, \mathbold{\mathbold{\phi}}_i\rangle|$ is largest.
 Continue this procedure until a certain number of iterations or a 
certain stopping criterion for the $k$-th residual $\bfr_k$ is achieved. 
These steps form the so-called OMP algorithm which is summarized as the following. \\
\rule{\textwidth}{0.4pt}
\textbf{Algorithm 1:} Orthogonal Matching Pursuit (OMP) \\
\rule{\textwidth}{0.4pt} 
\textbf{Input:} $\mathbf{\mathbold{\phi}}_{m\times n}$, $\mathbf{b}_{n\times 1}$, sparsity $s$, maximum iterations $k_{\max}$ ($k_{\max}<m$), and tolerance $\epsilon$. \\
\textbf{Initialization:} $\mathcal{S}_0=\emptyset$, $\mathbf{r}_0=\mathbf{b}$, $k=0$.\\
\vspace{0.1cm}
\ \ \ \textbf{while} $k<k_{\max}$ and $|r_k|>\epsilon$ \\
\vspace{0cm}
\ \ \ \ \ \ \ \ $k=k+1$; \\
\vspace{0cm}
\ \ \ \ \ \ \ \ $i_k=arg\max_{1\leq i\leq n}\{|\mathbf{\mathbold{\phi}_i}^\top\mathbf{r_{k-1}}|\}$; \\
\vspace{0cm}
\ \ \ \ \ \ \ \ $\mathcal{S}_k=\mathcal{S}_{k-1}\cup\{i_k\}$; \\
\vspace{0cm}
\ \ \ \ \ \ \ \ $\mathbf{r_k}=\mathbf{b}-\mathbf{\Phi}_{\mathcal{S}_k}\mathbf{\Phi}_{\mathcal{S}_k}^{\dagger}\mathbf{b}$; \\
\vspace{0cm}
\ \ \ \textbf{end} \\
\vspace{0cm}
\textbf{Output:} $\mathcal{S}=\mathcal{S}_{k}$, $\mathbf{x}_{S}=\mathbf{\Phi}_{\mathcal{S}}^{\dagger}\mathbf{b}$, and $\mathbf{x}_{S^c}=\mathbf{0}$. \\
\rule{\textwidth}{0.4pt} \\  
Since the orthogonal matching pursuit (OMP)  algorithm for compressive sensing study 
was introduced \cite{TroppFirst}, plenty 
of the different modified OMP algorithms have been developed. 
For example, the regularized OMP (ROMP) \cite{NeedellROMP}, generalized OMP (GOMP) \cite{WangGOMP}, 
stagewise OMP (StOMP) \cite{DonohoStOMP}, subspace pursuit (SP) \cite{DaiSP}, and compressive 
sampling matching pursuit (CoSaMP) \cite{NeedellCosamp}. 
The following generalized orthogonal matching pursuit (GOMP) is a generalization of the standard OMP.  \\
\rule{\textwidth}{0.4pt}
\textbf{Algorithm 2:} Generalized Orthogonal Matching Pursuit (GOMP) \\
\rule{\textwidth}{0.4pt}
\textbf{Input:} $\mathbf{\Phi}_{m\times n}$, $\mathbf{b}_{n\times 1}$, sparsity $s$, number of indices $N$ for each iteration, maximum iterations $k_{\max}$ ($k_{\max}\leq m/N$), and tolerance $\epsilon$. \\
\textbf{Initialization:} $\mathcal{S}_0=\emptyset$, $\mathbf{r}_0=\mathbf{b}$, $k=0$.\\
\vspace{0.1cm}
\ \ \ \textbf{while} $k< k_{\max}$ and $|r_k|>\epsilon$\\
\vspace{0cm}
\ \ \ \ \ \ \ \ $k=k+1$; \\
\vspace{0cm}
\ \ \ \ \ \ \ \ $\{i_1,i_2,\cdots,i_N\}=$ the largest $N$ indices which maximize $|\mathbold{\phi}_i^\top\mathbf{r}_{k-1}|$; \\
\vspace{0cm}
\ \ \ \ \ \ \ \ $\mathcal{S}_k=\mathcal{S}_{k-1}\cup\{i_1,i_2,\cdots,i_N\}$; \\
\vspace{0cm}
\ \ \ \ \ \ \ \ $\mathbf{r_k}=\mathbf{b}-\mathbf{\Phi}_{\mathcal{S}_k}\mathbf{\Phi}_{\mathcal{S}_k}^{\dagger}\mathbf{b}$; \\
\vspace{0cm}
\ \ \ \textbf{end} \\
\vspace{0cm}
\textbf{Output:} $\mathcal{S}=\mathcal{S}_{k}$, $\mathbf{x}_{S}=\mathbf{\Phi}_{\mathcal{S}}^{\dagger}\mathbf{b}$, and $\mathbf{x}_{S^c}=\mathbf{0}$. \\
\rule{\textwidth}{0.4pt} \\
\\
The family of OMP algorithms
have largely drawn people's attention because of its effective performance and 
its high efficiency. A lot of theoretical aspects of the OMP algorithm has been developed as 
well. In \cite{TroppOMP}, Tropp and Gilbert showed that, for a $s$-sparse signal $\mathbf{x}$ 
and an Gaussian sensing matrix $\mathbf{\Phi}\in\mathbb{R}^{m\times n}$, the OMP recovers 
$\mathbf{x}$ from $\mathbf{y} = \mathbf{\Phi}\mathbf{x}$ with overwhelming probability if the 
number of measurements satisfy $m\sim s\cdot log n$. In \cite{WangOMP}, Wang and Shim showed 
that the exact recovery of an $s$-sparse signal can be guaranteed by using OMP algorithm in $s$ 
iterations if the RIP constant satisfies $\delta_{s+1}<\frac{1}{\sqrt{s}+1}$.
See also \cite{MoShen12} for the same result.  More recently,  
this condition has been improved to $\delta_{s+1}<\frac{1}{\sqrt{s+1}}$ by Mo in 
\cite{MoSharpbound} and \cite{WenMoSharpcondition}, and extended to block signal recovery setting in 
\cite{LaiBlocksparsesignal}. Meanwhile, the bound $\delta_{s+1}<\frac{1}{\sqrt{s+1}}$ is also strict, as 
it was shown in \cite{WenImprovedbound} and \cite{WenMoSharpcondition},
the OMP algorithm may fail to recover $s$-sparse signal 
$\mathbf{x}$ in $s$ iterations if $\delta_{s+1}\geq\frac{1}{\sqrt{s+1}}$. Researchers are also interested in finding the 
conditions of eventually recovering the $s$-sparse signal with more than $s$ iterations. 
In \cite{ZhangSparserecovery}, Zhang showed that OMP recovered any $s$-sparse 
signal with $30s$  iteration under the condition $\delta_{31s}<\frac{1}{3}$. See also \cite{Cohen17} for another convergence
analysis of the OMP iterations.  Zhang's result has recently been 
improved by Wang and Shim in \cite{WangHowmanyiterations}, which showed that OMP can accurately 
recover all $s$-sparse signals within $\left\lceil 2.8s\right\rceil$ iterations if the matrix 
$\mathbf{\mathbold{\phi}}$ satisfies a certain restricted isometry property (RIP) condition. There are many other results on the 
OMP algorithms in the literature and we will not exhaust them in this paper. 
\\
\\
We propose in this paper a new approach, which we will call it 
\emph{quasi-orthogonal matching pursuit}
(QOMP) algorithm. The main idea of QOMP is described as follows. 
\\
Instead of choosing $s=1$ each time in 
(\ref{laisidea}), we are greedy enough to choose $2$ terms as sparse solutions 
 since most applications have a sparsity more than 2. We have to solve the best  approximations
\begin{equation}
\label{Shensidea}
 \min_{b_1, b_2} \|b_1 \mathbold{\mathbold{\phi}}_i+b_2\mathbold{\phi}_j -\bfb\|.
\end{equation}
for all $i\not =j, i, j=1, \cdots, n$ to find the residuals. 
We choose the best index pair, $(i_1, j_1)$ such that the residual is the smallest:  
\begin{equation}
\label{QOMPidea}
\min_{i, j, b_1, b_2\atop i\not=j} 
\|b_1 \mathbold{\phi}_i+b_2\mathbold{\phi}_j -\bfb\| = \min_{b_1, b_2} \|\bfb - b_1 \mathbold{\phi}_{i_1} - b_2\mathbold{\phi}_{j_1}\|. 
\end{equation}
Once we find $(i_1,j_1)$ to solve (\ref{QOMPidea}), 
we let $\bfr_1= \bfb- b_{i_1}\mathbold{\phi}_{i_1}-
b_{j_1}\mathbold{\phi}_{j_1}$ and repeat the procedure.  
This leads to our QOMP algorithm. It is worthwhile to note that QOMP becomes GOMP with $N=2$ if all the columns 
$\mathbold{\phi}_i$ are orthogonal to each other, as the minimization problems (\ref{Shensidea}) and (\ref{QOMPidea}) 
decouples to find the two indices which maximize $|\mathbold{\phi}_i^\top\mathbf{b}|$. 
\\
\\
Clearly, the computational burden is significantly increased. 
However, due to the parallel computation or graphics processing unit (GPU) setting  as  the amount of
computation for each minimization in (\ref{Shensidea}) is small, 
one is able to carry out the computation when $n$ is reasonably large, say $n=1000--10,000$. See \S 3.1 for our explanation. 
This  also explains a significant difference from the weak OMP, OMMP, and BOMP algorithms 
as multiple indices are chosen during each iteration, 
see, e.g. \cite{TroppFirst}, \cite{Xu15}, and \cite{LaiBlocksparsesignal}. 
The QOMP algorithm is summarized as following. \\
\rule{\textwidth}{0.4pt}
\textbf{Algorithm 3:} Quasi-Orthogonal Matching Pursuit (QOMP) \\
\rule{\textwidth}{0.4pt}
\textbf{Input:} $\mathbf{\Phi}_{m\times n}$, $\mathbf{b}_{n\times 1}$, sparsity $s$ ($s\geq 2$), maximum 
iterations $k_{\max}$ ($k_{\max}\leq m/2$), and tolerance $\epsilon$. \\
\textbf{Initialization:} $\mathcal{S}_0=\emptyset$, $\mathbf{r}_0=\mathbf{b}$, $k=0$. 
$\mathbf{\Psi}_{m\times n}=\mathbf{\Phi}_{m\times n}$. \\
\vspace{0.1cm}
\ \ \ \textbf{while} $k<k_{\max}$ and $|\mathbf{r_k}|>\epsilon$ \\ 
\vspace{0cm}
\ \ \ \ \ \ \ \ $k=k+1$;  \\
\vspace{0cm}
\ \ \ \ \ \ \ \ 
$\mathbf{Res}_{(i,j)}(\mathbf{r}_{k-1})=\min_{u,v\in\mathbb{R}}\{\|\mathbold{\psi}_i u
+\mathbold{\psi}_j v-\mathbf{r_{k-1}}\|_2\}$;  \\
\vspace{0cm}
\ \ \ \ \ \ \ \ $(i_k,j_k)=arg\min_{1\leq i\leq n, 1\leq j
\leq n}\{\mathbf{Res}_{(i,j)}(\mathbf{r}_{k-1})\}$; \\
\vspace{0cm}
\ \ \ \ \ \ \ \ $\mathcal{S}_k=\mathcal{S}_{k-1}\cup\{i_k,j_k\}$; \\
\vspace{0cm}
\ \ \ \ \ \ \ \ 
$\mathbf{r_k}=\mathbf{b}-\mathbf{{\Phi}}_{\mathcal{S}_k}\mathbf{{\Phi}}_{\mathcal{S}_k}^{\dagger}
\mathbf{b}$; \\
\vspace{0cm}
\ \ \ \ \ \ \ \ $\mathbf{\Psi}_{\{i_k,j_k\}}=\mathbf{0};$ \\
\vspace{0cm}
\ \ \ \textbf{end} \\
\vspace{0cm}
\textbf{Output:} $\mathcal{S}=\mathcal{S}_{k}$, 
$\mathbf{x}_{S}=\mathbf{{\Phi}}_{\mathcal{S}}^{\dagger}\mathbf{b}$, and 
$\mathbf{x}_{S^c}=\mathbf{0}$. \\
\rule{\textwidth}{0.4pt} 
\\
\\
The notation $\mathbf{\Phi}_{\mathcal{S}}^{\dagger}=(\mathbf{\Phi}_{\mathcal{S}}^\top\mathbf{{\Phi}}_{\mathcal{S}})^{-1}\mathbf{{\Phi}}_{\mathcal{S}}^\top$ is the pseudo-inverse of $\mathbf{\Phi}_{\mathcal{S}}$, and $\mathbf{Res}_{(i,j)}(\mathbf{r}_{k-1})$ is the residual of $\mathbf{r}_{k-1}$ after projected onto the hyperplane spanned by the columns $\mathbf{\mathbold{\phi}}_i$ and $\mathbf{\mathbold{\phi}}_j$.  Note that the maximum iterations can not exceed $\frac{m}{2}$, otherwise the pseudo-inverse would not make sense.   Also note that we have a column update step $\mathbf{\Psi}_{\{i_k,j_k\}}=\mathbf{0}$, it is because we do not want the process to pick the same indices as the previous iterations. As seen in the algorithm above, 
instead of indexing each column of the sensing matrix, we 
index a pair of columns of $\mathbf{\Phi}$ from each iteration. 
In the $k$-th iteration step, the algorithm filters 
in the pair of columns of the largest correlation with the current vector 
measurement from our sensing matrix, and then add these two indices of columns 
in that pair as two new elements to the current support set $\mathcal{S}_{k-1}$.  
\\
%Besides the main idea explained above, we also 
%add \textcolor{red}{two steps} to ensure that the QOMP algorithm can perform well. 
%Continue this procedure until a certain number of iterations or a certain stopping criterion
%for the residual $\|\mathbold{\phi} \bfx_k- \bfb\|$ is 
%satisfied. 
\\
Suppose the sparsity of $\mathbf{x}$ is $s$, since each time two columns of the sensing matrix 
are chosen by the algorithm,  
we need at least $\left\lceil \frac{s}{2} \right\rceil$ total 
number of iterations.
There are chances that the algorithm fails to exactly recover $\mathbf{x}$ within $\left\lceil \frac{s}{2} \right\rceil$ number of iterations, for example, if the sparsity $s$ of the signal vector $\mathbf{x}$ becomes large.
We remedy this by adding two steps to ensure that the QOMP algorithm can perform well. Firstly, we add $\left\lfloor \frac{s}{2} 
\right\rfloor$ more iterations in addition to $\left\lceil \frac{s}{2} \right\rceil$ iterations (hence a total of $s$ iterations 
with a total of $2s$ columns being selected) %(empirically totally $s$ 
%iterations, so $|\Omega|=2s$, therefore the running time is also doubled) 
in the algorithm to get a superset 
$\mathcal{S}$ which hopefully will contain the support of the true sparse solution 
$\mathbf{x}$. Secondly, we use $\mathcal{S}$ as the 
index set to get the estimated signal $\mathbf{\hat{x}}$ by using a sparse least square method (i.e. greedy QR decomposition for
$\Phi$ to obtain the least square solution  instead of SVD/pseudo-inverse), which should 
have $r\le 2s$ nonzero entries if the rank of $\Phi_{\cal S}$ is $r<2s$ or $s$ nonzero entries of $\Phi_{\cal S}$ is of full 
rank. Thus we can approximate $\mathbf{x}$ by $\mathbf{\hat{x}}$ as long as  the infinity norm of 
$\mathbf{\Phi}\mathbf{\hat{x}} -\bfb$ is negligible. 
\\
\\
The organization of this paper is as follows. We shall first establish  the QOMP algorithm 
in the next section by showing that each iteration finds at least one correct
index, and hence the exact recover of the $s$-sparse signal $\mathbf{x}$ can be guaranteed within $s$ iterations. 
Then we show in the noisy setting, the QOMP will find the 
correct indices if the noisy level is small relative to the smallest nonzero
entry of the exact signal $\bfx$.  
Next we show the norm of the residual vector ${\bf r}_k$ decreases to zero in 
a linear fashion (depends on the size of the matrix) with high probability.  
Furthermore we shall demonstrate that the new algorithm has a 
better performance than the standard OMP and GOMP (with $N=2$)  numerically in Section \S 3. 
In addition, we shall explain that the computational complexity of the QOMP 
is reasonable when the size $n$ of the signal $\bfx$ is not too large based on 
parallel computation or GPU.    Finally, we make some comments and point out future research problems 
in the Section \S 4.

\section{Theoretical Analysis of Convergence} %and Computational Complexity}
\subsection{Signal Recovery in the Noiseless Setting}
From this section on, we assume all the norm 
$\|\cdot\|$ is $2$-norm if without specification, 
and all the dimension of a matrix of size $m\times n$ satisfies $m<n$. 
There are plenty of conditions have been developed to imposed on the 
restricted isometry 
constant $\delta$ of the sensing matrix in order to have  a better performance 
of recovering the 
sparse signal vector, as we already see in the introduction. In the present 
paper, we will shed 
more light on the mutual coherence $\mu$ of the sensing matrix $\mathbf{\Phi}$. \\
It is known that if $m<n$, then $\mu(\mathbf{{\Phi}})\geq 
\sqrt{\frac{n-m}{m(n-1)}}\geq \frac{1}{2m}$ if $m$ is large enough, see 
\cite{lowerboundmutualcoherence}. However, in our case, we are interested in 
finding an upper 
bound for the mutual coherence $\mu(\mathbf{\Phi})$ if the given 
sensing matrix   
$\mathbf{{\Phi}}$ is randomly generated with each entry of $\mathbf{\Phi}$ 
is i.i.d. of mean zero with  
some finite variance. For example, letting $\phi_{ij}$ denote the entries of $\mathbf{{\Phi}}$,  
then $\phi_{ij}\sim \mathcal{N}(0,1)$ or $\phi_{ij}\sim Unif(0,1)$. If each entry of $\mathbf{\Phi}$ is i.i.d. from the standard Gaussian distribution, then we say that the sensing matrix $\mathbf{{\Phi}}$ is Gaussian.\\
Let us first state a version of strong laws of large numbers 
which we will use later to prove our results.
\begin{lemma} (Strong laws of large numbers, Kolmogorov, Marcinkiewicz 
and Zygmund) 
\label{lem1}
Let $\xi,\xi_1,\xi_2,\cdots$ be i.i.d. random variables, and fix any $p\in (0,2)$. Then 
$n^{-1/p}\sum_{k\leq n}\xi_k$ converges a.s. 
iff ${\bf E}(|\xi|^p)<\infty$ and either $p\leq 1$ or 
${\bf E}(\xi)=0$. 
In that case, the limit equals ${\bf E}(\xi)$ for $p=1$ and is otherwise $0$.
\end{lemma}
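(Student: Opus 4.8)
This is the classical Marcinkiewicz--Zygmund strong law, and I would prove it by the standard truncation-plus-Kronecker argument, treating necessity and sufficiency separately; the value of the limit falls out along the way, and the one genuinely delicate point is a Cesàro estimate on the truncated means. For necessity, suppose $T_n:=n^{-1/p}\sum_{k\le n}\xi_k$ converges a.s.; then from $n^{-1/p}\xi_n=T_n-\big((n-1)/n\big)^{1/p}T_{n-1}$ and $((n-1)/n)^{1/p}\to1$ one gets $n^{-1/p}\xi_n\to0$ a.s., so for each $\varepsilon>0$ the events $\{|\xi_n|>\varepsilon n^{1/p}\}$ occur finitely often, and the second Borel--Cantelli lemma (using independence) gives $\sum_nP(|\xi|>\varepsilon n^{1/p})<\infty$; comparing $\sum_nP(|\xi|^p/\varepsilon^p>n)$ with ${\bf E}(|\xi|^p/\varepsilon^p)$ then forces ${\bf E}(|\xi|^p)<\infty$. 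If moreover $p>1$, then ${\bf E}|\xi|<\infty$, Kolmogorov's strong law gives $S_n/n\to{\bf E}(\xi)$, and since $T_n=n^{1-1/p}(S_n/n)$ with $1-1/p>0$, convergence of $T_n$ forces ${\bf E}(\xi)=0$; for $p=1$ the same strong law identifies the limit as ${\bf E}(\xi)$ (for $p<1$ the value $0$ comes out of the sufficiency part).

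For sufficiency, assume ${\bf E}(|\xi|^p)<\infty$ and, when $p\ge1$, that ${\bf E}(\xi)=0$ (for $p=1$ this is harmless, since replacing $\xi_k$ by $\xi_k-{\bf E}(\xi)$ reduces $n^{-1}S_n\to{\bf E}(\xi)$ to the centered statement). I would truncate at level $k^{1/p}$, setting $\eta_k=\xi_k\mathbf{1}\{|\xi_k|\le k^{1/p}\}$; then $\sum_kP(\xi_k\ne\eta_k)=\sum_kP(|\xi|>k^{1/p})<\infty$, so a.s.\ $\xi_k=\eta_k$ for all large $k$ and it suffices to treat the $\eta_k$. With $\zeta_k=\eta_k-{\bf E}(\eta_k)$, the key bound is
\[
\sum_{k\ge1}k^{-2/p}\,\mathrm{Var}(\eta_k)\le\sum_{k\ge1}k^{-2/p}\,{\bf E}\big(\xi^2\mathbf{1}\{|\xi|\le k^{1/p}\}\big)={\bf E}\Big(\xi^2\sum_{k\ge|\xi|^p}k^{-2/p}\Big)\le C\big(1+{\bf E}|\xi|^p\big)<\infty,
\]
using $\sum_{k\ge N}k^{-2/p}\le CN^{1-2/p}$, valid because $2/p>1$ when $p<2$ (so $\xi^2|\xi|^{p-2}=|\xi|^p$ on $\{|\xi|\ge1\}$, with $\{|\xi|<1\}$ contributing only a bounded amount). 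Then Kolmogorov's convergence theorem gives a.s.\ convergence of $\sum_kk^{-1/p}\zeta_k$, and Kronecker's lemma gives $n^{-1/p}\sum_{k\le n}\zeta_k\to0$ a.s.

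What remains, and the step I expect to be the main obstacle, is $n^{-1/p}\sum_{k\le n}{\bf E}(\eta_k)\to0$: the crude estimate $|{\bf E}(\eta_k)|\le k^{(1-p)/p}{\bf E}|\xi|^p$ only yields $O(n^{1/p})$, exactly the critical order, so one genuinely needs a refinement. For $0<p<1$ I would fix a cutoff $M$, use $|{\bf E}(\eta_k)|\le{\bf E}(|\xi|\mathbf{1}\{|\xi|\le k^{1/p}\})$, split at $\{|\xi|\le M\}$ (whose piece contributes $O(Mn)=o(n^{1/p})$ since $1-1/p<0$), and on $\{|\xi|>M\}$ bound $|\xi|\le|\xi|^pk^{(1-p)/p}$ to get a remainder at most $C\,n^{1/p}\,{\bf E}(|\xi|^p\mathbf{1}\{|\xi|>M\})$, so that $\limsup_nn^{-1/p}\sum_{k\le n}|{\bf E}(\eta_k)|\le C\,{\bf E}(|\xi|^p\mathbf{1}\{|\xi|>M\})\to0$ as $M\to\infty$. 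For $p\ge1$ with ${\bf E}(\xi)=0$ I would instead use ${\bf E}(\eta_k)=-{\bf E}(\xi\mathbf{1}\{|\xi|>k^{1/p}\})$, so $|{\bf E}(\eta_k)|\le k^{(1-p)/p}{\bf E}(|\xi|^p\mathbf{1}\{|\xi|>k^{1/p}\})$ (as $x\mapsto x^{1-p}$ is nonincreasing), and since ${\bf E}(|\xi|^p\mathbf{1}\{|\xi|>k^{1/p}\})\to0$ while $\sum_{k\le n}k^{(1-p)/p}=O(n^{1/p})$, a Cesàro argument again drives the average to $0$. Combining this with the preceding step yields $n^{-1/p}S_n\to0$ a.s.\ in all cases, and $\to{\bf E}(\xi)$ after undoing the recentering when $p=1$. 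In short, only the dominated-convergence refinement ${\bf E}(|\xi|^p\mathbf{1}\{|\xi|>M\})\to0$ is non-routine; the rest is Borel--Cantelli, Kolmogorov's convergence theorem and Kronecker's lemma --- and since the result is classical, one could alternatively just cite it (e.g.\ from Kallenberg, \emph{Foundations of Modern Probability}).
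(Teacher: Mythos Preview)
Your proposal is correct and follows essentially the same route as the paper's proof (which is the standard Kallenberg argument): truncate at level $k^{1/p}$, use Borel--Cantelli to reduce to the truncated sequence, control $\sum_k k^{-2/p}\mathrm{Var}(\eta_k)$ via the Fubini-type swap, and apply Kronecker's lemma; necessity is handled identically via $n^{-1/p}\xi_n\to 0$ and the converse Borel--Cantelli. The only organizational difference is that the paper, rather than centering first and then running your Ces\`aro argument on $n^{-1/p}\sum_{k\le n}{\bf E}(\eta_k)$, shows directly that $\sum_n n^{-1/p}\xi'_n$ converges a.s.\ (by absolute convergence when $p<1$, and by the three-series criterion when $p>1$, proving $\sum_n n^{-1/p}|{\bf E}(\xi'_n)|<\infty$ outright), which makes the $p<1$ case in particular a bit shorter than your cutoff-at-$M$ argument.
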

\begin{proof}
We leave the proof to Appendix A or refer to Theorem $3.23$ in 
\cite{KallenbergProbability} for a proof.
\end{proof}
Governed by the strong laws of large numbers, the mutual coherence 
$\mu(\mathbf{{\Phi}})$ will 
decrease in the order comparable to $1/o(\sqrt{m})$ as the size of 
$\mathbf{{\Phi}}$ becomes large. 
 
\begin{lemma} \label{lem2}
Let $\mathbf{\Phi}\in\mathbb{R}^{m\times n}$ be a Gaussian sensing matrix. Then for large $m$, we have $\mu(\mathbf{{\Phi}})\leq \frac{1}{f(m)}$ for some function $f$, 
where $f(m)=o(\sqrt{m})$ and $f(m)\to\infty$ as $m\to\infty$, with high probability (e.g. $f(m)=\sqrt{m}/\log(m)$).
\end{lemma}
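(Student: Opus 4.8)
The plan is to prove this by a union bound over all $\binom{n}{2}$ pairs of columns, after first establishing a strong (super-polynomial) tail bound for the normalized inner product of a single pair. Fix $i\neq j$, let $\phi_i,\phi_j$ denote the corresponding columns of $\mathbf{\Phi}$, and set $\rho_{ij}:=|\phi_i^\top\phi_j|/(\|\phi_i\|\,\|\phi_j\|)$. Conditioning on $\phi_i$ and writing $a=\phi_i/\|\phi_i\|$, decompose $\phi_j=g\,a+w$ with $g=a^\top\phi_j$ and $w\perp a$. Since $\phi_j\sim\mathcal{N}(0,I_m)$ is independent of $\phi_i$, we have $g\sim\mathcal{N}(0,1)$ and $\|w\|^2\sim\chi^2_{m-1}$ independent of $g$, so that
\begin{equation}
\rho_{ij}^2=\frac{g^2}{g^2+\|w\|^2}\sim\mathrm{Beta}\!\left(\tfrac12,\tfrac{m-1}{2}\right),
\end{equation}
a law that does not depend on $i,j$ (equivalently, by rotational invariance $\rho_{ij}$ is the absolute first coordinate of a uniform point on $S^{m-1}$). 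This distributional identification is the only probabilistic input, and it is routine.

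Next I would extract the tail bound. Using the event identity $\{\rho_{ij}^2>x\}=\{g^2(1-x)>x\,\|w\|^2\}$, together with the standard lower-tail estimate $\Pr(\|w\|^2<(m-1)/2)\le e^{-c(m-1)}$ for a $\chi^2_{m-1}$ variable (an absolute constant $c>0$) and the Gaussian bound $\Pr(g^2>u)\le e^{-u/2}$ valid for $u\ge 1$, one gets, for all sufficiently small $t>0$,
\begin{equation}
\Pr\!\left(\rho_{ij}>t\right)\;\le\;e^{-c(m-1)}+\exp\!\left(-\frac{t^2(m-1)}{4(1-t^2)}\right).
\end{equation}
(Alternatively one may bound the incomplete Beta integral directly, using $s^{-1/2}\le x^{-1/2}$ on $[x,1]$ and $B(\tfrac12,\tfrac{m-1}{2})\sim\sqrt{2\pi/m}$, to obtain $\Pr(\rho_{ij}>t)\lesssim\sqrt{m}\,t^{-1}(1-t^2)^{(m-1)/2}$; either form suffices.) Taking $t=1/f(m)$ with $f(m)=\sqrt{m}/\log m$, the dominant exponent is $\asymp\tfrac14\log^2 m$, so $\Pr(\rho_{ij}>1/f(m))$ is smaller than every fixed negative power of $m$ once $m$ is large.

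The union bound then closes the argument:
\begin{equation}
\Pr\!\left(\mu(\mathbf{\Phi})>\tfrac{1}{f(m)}\right)\;\le\;\binom{n}{2}\,\max_{i\neq j}\Pr\!\left(\rho_{ij}>\tfrac{1}{f(m)}\right)\;\le\;n^2\!\left(e^{-c(m-1)}+e^{-\frac14(1+o(1))\log^2 m}\right),
\end{equation}
which tends to $0$ as $m\to\infty$ provided $\log n=o(\log^2 m)$ — in particular whenever $n$ grows at most polynomially in $m$, the standard compressive-sensing regime. Since $f(m)=\sqrt{m}/\log m$ satisfies $f(m)=o(\sqrt m)$ and $f(m)\to\infty$, the lemma follows. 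I would add a remark that Lemma~\ref{lem1}, applied with $p$ close to $2$ to the i.i.d.\ products $\phi_{ki}\phi_{kj}$ (mean $0$, all moments finite), already predicts the $m^{-1/2+\varepsilon}$ scaling of a single $\rho_{ij}$; it is the need for uniformity over a number of pairs growing with $m$ that forces us to replace that almost-sure heuristic by the quantitative tail bound.

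The step I expect to be the main obstacle is calibrating this union bound: the single-pair failure probability must decay fast enough to absorb the factor $\binom{n}{2}\asymp n^2$, and it is exactly this balance that (i) produces the logarithmic loss, so the best $f$ reachable this way is $\Theta(\sqrt m/\log m)$ rather than $\Theta(\sqrt m)$, and (ii) imposes the mild restriction $\log n=o(\log^2 m)$ relating the dimensions of $\mathbf{\Phi}$. Sharpening the $\chi^2$/Gaussian split into the exact Beta tail would widen the admissible range of $f$ somewhat, but this is unnecessary for the qualitative $o(\sqrt m)$ statement claimed here; the distributional identification and the tail estimate are otherwise standard.
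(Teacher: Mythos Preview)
Your proof is correct but follows a genuinely different route from the paper's. The paper argues via Lemma~\ref{lem1} (the Marcinkiewicz--Zygmund strong law): for a fixed pair $(i,j)$ it applies the SLLN with exponent $p<2$ to the i.i.d.\ products $\phi_{ki}\phi_{kj}$ to obtain $m^{-1/p}\,\phi_i^\top\phi_j\to0$ a.s., then chooses $p$ so that $f(m)\,m^{1/p-1}\to0$, and finally passes to the maximum over all pairs. You instead identify the exact law $\rho_{ij}^2\sim\mathrm{Beta}(\tfrac12,\tfrac{m-1}{2})$, derive a sub-Gaussian--type tail for a single pair, and close with a union bound over $\binom{n}{2}$ pairs.

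Your approach is more quantitative: it produces a non-asymptotic failure probability and makes explicit the trade-off between $n$ and $m$ (the condition $\log n=o(\log^2 m)$), which is exactly what ``with high probability'' should mean here. The paper's almost-sure argument is softer and leans on Lemma~\ref{lem1} as stated, but it treats the passage to the maximum over pairs rather informally; your remark that the SLLN handles only a single pair while uniformity requires a genuine tail bound is well taken. On the other hand, the paper's route generalizes immediately to any i.i.d.\ entry distribution with mean zero and finite variance (as noted in their Remark), whereas your Beta identification is specific to the Gaussian case and would need to be replaced by, say, Bernstein-type concentration for other entry laws.
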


\begin{proof}
Firstly, we observe that when $m$ gets larger, the norm $\|\mathbf{\mathbold{\phi}}_i\|$ is comparable to $\sqrt{m}$. Indeed, as for each 
$\phi_{ij}$ with finite variance, we 
have $\|\mathbf{\mathbold{\phi}}_i\|^2=\sum_{j=1}^m \phi^2_{ji}$. \\ Since each $\phi_{ji}\sim\mathcal{N}(0,1)$, we 
have $\bfE(\phi^2_{ji})=1$ for all $j=1,2,\cdots,m$, hence by the strong law of large numbers, 
$\frac{1}{m}\|\mathbf{\mathbold{\phi}}_i\|^2=\frac{1}{m}\sum_{j=1}^m \phi^2_{ji}\to 1$ 
a.s., hence $\|\mathbf{\mathbold{\phi}}_i\|\to\sqrt{m}$ a.s. as $m\to\infty$. \\
So now we have $\mu(\mathbf{{\Phi}})=\max_{1\leq i,j\leq n, i\neq 
j}\frac{|\mathbf{\mathbold{\phi}}_i^\top\mathbf{\mathbold{\phi}}_j|}{\|\mathbf{\mathbold{\phi}}_i\|\cdot 
\|\mathbf{\mathbold{\phi}}_j\|}\approx\max_{1\leq 
i,j\leq n, i\neq j}\frac{|\mathbf{\mathbold{\phi}}_i^\top\mathbf{\mathbold{\phi}}_j|}{m}$ as 
$m\to \infty$. Note that 
$|\mathbf{\mathbold{\phi}}_i^\top\mathbf{\mathbold{\phi}}_j|=|\sum_{k=1}^m \phi_{ki}\phi_{kj}|$, by letting 
$X_k=\phi_{ki}\phi_{kj}$, we 
have 
$\frac{1}{m}\cdot|\mathbf{\mathbold{\phi}}_i^\top\mathbf{\mathbold{\phi}}_j|=\frac{1}{m}
\cdot|\sum_{k=1}^m X_k|$. \\
By the independence of $\phi_{ki}$ and $\phi_{kj}$, the expectation of each 
$X_k$ satisfies
\begin{align*}
\bfE(X_k)=\bfE(\phi_{ki}\phi_{kj})=\bfE(\phi_{ki})\bfE(\phi_{kj})=0,
\end{align*}
and the variance of each $X_k$ satisfies 
\begin{align*}
Var(X_k)=& Var(\phi_{ki}\phi_{kj})\cr 
=&(\bfE(\phi_{ki}))^2 Var(\phi_{kj})+(\bfE(\phi_{kj}))^2 Var(\phi_{ki})+Var(\phi_{ki})Var(\phi_{kj})=1,
\end{align*}
which implies that $\bfE(|X_k|^2)=\bfE(X_k^2)=Var(X_k)+(\bfE(X_k))^2=1$. Since the measure of a 
probability space is always one, which is bounded, we then have $L^p(X_k)\subset L^2(X_k)$ as 
$p<2$ for all $k$. Therefore $\bfE(|X_k|^p)<\infty$ for $p<2$ and for all $k$. \\
Now apply Lemma \ref{lem1} to the sequence of random variables $X_k$, $k=1,2,\cdots,n$, we have 
\begin{equation}
m^{-1/p}(\sum_{k=1}^m X_k)=m^{-1/p}(\sum_{k=1}^m \phi_{ki}\phi_{kj})=
m^{-1/p}(\mathbf{\mathbold{\phi}}_i^\top\mathbf{\mathbold{\phi}}_j)\to 0
\end{equation}
almost surely  for all $p\in (0,2)$. Since the limit is zero, so is 
\begin{equation}
m^{-1/p}|\sum_{k=1}^m X_k|=m^{-1/p}|\sum_{k=1}^m \phi_{ki}\phi_{kj}|
=m^{-1/p}|\mathbf{\mathbold{\phi}}_i^\top\mathbf{\mathbold{\phi}}_j|\to 0.
\end{equation}
Note that $f(m)=o(\sqrt{m})$, there is some $p<2$ such that
$f(m)\cdot m^{\frac{1}{p}-1}$ 
converges to zero almost surely. For example, we can take $\frac{1}{p}=3/4-1/2\cdot 
\log_m f(m)$, as $f(m)=o(\sqrt{m})$, we will have $log_m f(m) <
\log_m {\sqrt{m}}=1/2$ for 
large $m$, therefore $\frac{1}{p}=3/4-1/2\cdot \log_m f(m) 
>3/4-1/2\cdot \log_m \sqrt{m}
=1/2$, or $p<2$. By plugging $p$ into $f(m)\cdot m^{\frac{1}{p}-1}$, we have $f(m)\cdot 
m^{\frac{1}{p}-1}=(\frac{f(m)}{\sqrt{m}})^{1/2}\to 0$ as $m\to\infty$.  Hence we have  
\begin{equation}
m^{-1}\cdot|\mathbf{\mathbold{\phi}}_i^\top\mathbf{\mathbold{\phi}}_j|\cdot f(m)
=m^{-1/p}\cdot|\mathbf{\mathbold{\phi}}_i^\top\mathbf{\mathbold{\phi}}_j|
\cdot f(m)\cdot m^{\frac{1}{p}-1}\to 0
\end{equation}
almost surely as $m\to\infty$. Therefore, by taking the supremum over all $n$, we have 
\begin{equation}
\mu(\mathbf{\mathbold{\phi}})\cdot f(m)=\sup_{1\leq i,j\leq n, i\neq j}
\frac{|\mathbf{{\Phi}}_i^\top\mathbf{\mathbold{\phi}}_j|}{m}\cdot f(m)\to 0
\end{equation}
almost surely as $m\to\infty$. Hence, with high probability, we have $\mu(\mathbf{{\Phi}})\cdot 
f(m)\leq 1$ for large $m$, and the result is proved.
\end{proof}
%Since each entry of $\mathbf{\mathbold{\phi}}$ is generated randomly, the quantity $|\mathbf{\mathbold{\phi}}_i^\top\mathbf{\mathbold{\phi}}_j|$ is the absolute value of the sum of $m$ i.i.d. random variables. From the strong law of large numbers in probability theory, we know that if $f(m)=o(m)$, then $f(m)\cdot\frac{|\mathbf{\mathbold{\phi}}_i^\top\mathbf{\mathbold{\phi}}_j|}{\|\mathbf{\mathbold{\phi}}_i\|\cdot\|\mathbf{\mathbold{\phi}}_j\|}\to 0$ almost surely as $m\to\infty$ for all $i,j$. Hence $f(m)\cdot (\max_{1\leq i,j\leq n, i\neq j}\frac{|\mathbf{\mathbold{\phi}}_i^\top\mathbf{\mathbold{\phi}}_j|}{\|\mathbf{\mathbold{\phi}}_i\|\cdot \|\mathbf{\mathbold{\phi}}_j\|})=f(m)\cdot \mu(\mathbf{\mathbold{\phi}})\to 0$ almost surely as $m\to\infty$. Therefore, for some fixed $c$, with high probability we have $\mu(\mathbf{\mathbold{\phi}})\cdot f(m)\leq c$ for large $m$.

\begin{remark}
The proof we just did assumes that each entry of $\mathbf{{\Phi}}$ follows standard normal 
distribution $\mathcal{N}(0,1)$, however, it is not necessary to make such an assumption. The 
lemma will be true as long as each entry of $\mathbf{{\Phi}}$ are i.i.d. with mean zero and 
finite variance. The proof will be almost exactly the same as what we just did, except with 
modification of some constants, we will leave it to the interested readers.
\end{remark}
\noindent
Now we are able to develop our main results, which we summarize them in the following Theorem 
\ref{thm1} and Theorem \ref{thm2}. 
\begin{theorem} 
\label{thm1}
Suppose $\mu(\mathbf{{\Phi}})\leq \frac{1}{f(m)}$ for a function $f$ which satisfies 
$f(m)=o(\sqrt{m})$ and $f(m)\to\infty$ as $m\to \infty$. If the sparsity $s$ of the 
true signal $\mathbf{x}$ satisfies $2\leq s\leq\frac{f(m)}{5}$, 
then the following statement is true: For 
large $m$, among the two indices selected from the column indices of $\mathbf{{\Phi}}$ 
in the first iteration of Algorithm $3$, at least one index is the correct one.
\end{theorem}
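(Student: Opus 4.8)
The plan is to recast the first step of Algorithm~3 as a \emph{maximization} of a projection norm and then beat every ``completely wrong'' column pair with a single explicit ``at least partly right'' pair. For indices $i\neq j$, let $P_{(i,j)}$ denote the orthogonal projection onto $\mathrm{span}\{\mathbold{\phi}_i,\mathbold{\phi}_j\}$; by Pythagoras, $\mathbf{Res}_{(i,j)}(\mathbf{b})^2=\|\mathbf{b}\|^2-\|P_{(i,j)}\mathbf{b}\|^2$, so in the first iteration (where $\mathbf{r}_0=\mathbf{b}$ and $\mathbf{\Psi}=\mathbf{\Phi}$) the pair $(i_1,j_1)$ chosen by QOMP is precisely the one maximizing $\|P_{(i,j)}\mathbf{b}\|$. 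Both the span and this projection are invariant under rescaling of the columns, so I may assume $\|\mathbold{\phi}_i\|=1$ for every $i$; this only rescales the entries of $\mathbf{x}$, leaving its support $S$ (with $|S|\le s$) unchanged. Write $\mathbf{b}=\sum_{l\in S}x_l\mathbold{\phi}_l$, set $x_{\max}=\max_{l\in S}|x_l|$, and fix $k^{\ast}\in S$ with $|x_{k^{\ast}}|=x_{\max}$.

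For the lower bound I would estimate, using the coherence bound on the off-diagonal inner products,
\[
|\mathbold{\phi}_{k^{\ast}}^\top\mathbf{b}|\ \ge\ |x_{k^{\ast}}|-\sum_{l\in S\setminus\{k^{\ast}\}}|x_l|\,|\mathbold{\phi}_{k^{\ast}}^\top\mathbold{\phi}_l|\ \ge\ \bigl(1-\mu(s-1)\bigr)x_{\max}\ \ge\ \tfrac{4}{5}x_{\max},
\]
the last step using $\mu\le 1/f(m)$ and $s\le f(m)/5$. Since projecting onto a subspace that contains $\mathbold{\phi}_{k^{\ast}}$ is never smaller than projecting onto $\mathbold{\phi}_{k^{\ast}}$ alone, $\|P_{(k^{\ast},j)}\mathbf{b}\|\ge|\mathbold{\phi}_{k^{\ast}}^\top\mathbf{b}|\ge\tfrac45 x_{\max}$ for any $j\neq k^{\ast}$; hence the QOMP-optimal pair attains projection norm at least $\tfrac45 x_{\max}$.

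For the upper bound take $i,j\notin S$. Then $|\mathbold{\phi}_i^\top\mathbf{b}|\le\mu\sum_{l\in S}|x_l|\le\mu s\,x_{\max}\le\tfrac15 x_{\max}$, and likewise for $j$. With $\mathbf{c}=(\mathbold{\phi}_i^\top\mathbf{b},\mathbold{\phi}_j^\top\mathbf{b})^\top$ and $G$ the $2\times2$ Gram matrix of $\{\mathbold{\phi}_i,\mathbold{\phi}_j\}$, one has $\|P_{(i,j)}\mathbf{b}\|^2=\mathbf{c}^\top G^{-1}\mathbf{c}\le\|\mathbf{c}\|^2/\lambda_{\min}(G)$; since $G$ has unit diagonal and off-diagonal entry of modulus $\le\mu<1$ (so the two columns are linearly independent and $G$ is invertible), $\lambda_{\min}(G)\ge 1-\mu$, and therefore $\|P_{(i,j)}\mathbf{b}\|^2\le\frac{2}{25(1-\mu)}x_{\max}^2$. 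For $m$ large, $\mu\le 1/f(m)\le\tfrac12$, so $\|P_{(i,j)}\mathbf{b}\|\le\tfrac25 x_{\max}$. Because $\tfrac25 x_{\max}<\tfrac45 x_{\max}$, no pair contained in $S^{c}$ can be the maximizer, so the pair picked in the first iteration of Algorithm~3 must contain at least one index of $S$, which is the claim.

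The only step that is not a one-line coherence estimate is the upper bound on $\|P_{(i,j)}\mathbf{b}\|$: one must control the projection onto a \emph{two}-dimensional subspace, which forces us to invoke $\mu<1$ (to keep the Gram matrix invertible and $\lambda_{\min}(G)$ bounded away from $0$) rather than merely bound a single inner product. The hypothesis $s\le f(m)/5$ is what makes the numerical separation $\tfrac25<\tfrac45$ hold; a slightly weaker constant than $5$ would still work, at the cost of a smaller gap.
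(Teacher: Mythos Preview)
Your argument is correct and follows the paper's overall strategy---reduce the first QOMP step to maximizing $\|P_{(i,j)}\mathbf{b}\|$, upper-bound this quantity over pairs $i,j\notin S$, lower-bound it for a specific pair with $k^{\ast}\in S$, and separate the two via the coherence hypothesis $s\le f(m)/5$. Where you diverge from the paper is in how each bound is obtained. The paper writes out the $2\times 2$ projector explicitly as $\frac{1}{1-|\mathbold{\phi}_i^\top\mathbold{\phi}_j|^2}(\mathbold{\phi}_i\mathbold{\phi}_i^\top+\mathbold{\phi}_j\mathbold{\phi}_j^\top-(\mathbold{\phi}_i^\top\mathbold{\phi}_j)\mathbold{\phi}_j\mathbold{\phi}_i^\top-(\mathbold{\phi}_j^\top\mathbold{\phi}_i)\mathbold{\phi}_i\mathbold{\phi}_j^\top)$ and then bounds the four terms individually via triangle inequalities, producing expressions like $\frac{2f(m)+2}{f^2(m)-1}\sum_k|x_k|$ on the upper side and a matching multi-term lower bound on the other; the separation is then argued asymptotically (``for large $m$ both sides cross $1/2$''). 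You instead use two structural facts: for the lower bound, monotonicity of projection norms under subspace inclusion gives $\|P_{(k^{\ast},j)}\mathbf{b}\|\ge |\mathbold{\phi}_{k^{\ast}}^\top\mathbf{b}|$ directly; for the upper bound, the identity $\|P_{(i,j)}\mathbf{b}\|^2=\mathbf{c}^\top G^{-1}\mathbf{c}$ together with $\lambda_{\min}(G)=1-|\mathbold{\phi}_i^\top\mathbold{\phi}_j|\ge 1-\mu$ replaces the term-by-term expansion. This is shorter, yields the clean explicit gap $\tfrac25 x_{\max}<\tfrac45 x_{\max}$ (rather than an asymptotic ``eventually $<1/2$ vs.\ $>1/2$'' comparison), and generalizes more readily to $k$-tuples; the paper's expansion, by contrast, keeps every constant visible and makes the dependence on $f(m)$ fully explicit at each step.
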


%\textcolor{red}{You have made two assumptions: $i=1\in S$ and $|x_1|$ is maximum in the proof.  
%This is different from the one we assumed and hence, I modified the statement of the theorem above.  } \\
%\textcolor{blue}{I think the column corresponding to the largest absolute value may not be selected, because we only use the column of largest absolute value to establish the fact that if $i,j\notin S$, then $\|Proj(\mathbf{b})_{i,j\notin S}\|$ is not maximized in each iteration, which gives that either $i\in S$ or $j\in S$. However, $i$ or $j$ may not correspond to the column with largest absolute value.    Zhaiming}

\begin{proof}
Without loss of generality, let us assume each column $\mathbf{\mathbold{\phi}}_i$ is normalized, and assume the support 
set of $\mathbf{x}$ is $\Omega=\{1,2,\cdots,s\}$. Then 
$\mathbf{b}=\mathbf{{\Phi}}\mathbf{x}=x_1\mathbf{\mathbold{\phi}}_1+x_2\mathbf{\mathbold{\phi}}_2+\cdots+x_s
\mathbf{\mathbold{\phi}}_s
=\sum_{k=1}^s x_k \mathbf{\mathbold{\phi}}_k$, where $\mathbf{\mathbold{\phi}}_i$ is the $i$-column of 
$\mathbf{{\Phi}}$. \\
In the first iteration,
for each $1\leq i,j\leq n$, to minimize $\|\mathbf{\mathbold{\phi}}_iu+\mathbf{\mathbold{\phi}}_jv-\mathbf{b}\|_2$, 
it is equivalent to maximize the projection of $\mathbf{b}$ onto the hyperplane spanned by 
$\bfa_i$ and $\bfa_j$, which is
\begin{align*}
Proj(\mathbf{b})&=
\begin{bmatrix}\mathbf{\mathbold{\phi}}_i & \mathbf{\mathbold{\phi}}_j \end{bmatrix} 
\begin{bmatrix} \mathbf{\mathbold{\phi}}_i^\top\mathbf{\mathbold{\phi}}_i & \mathbf{\mathbold{\phi}}_i^\top \mathbf{\mathbold{\phi}}_j\\ \mathbf{\mathbold{\phi}}_j^\top \mathbf{\mathbold{\phi}}_i & \mathbf{\mathbold{\phi}}_j^\top \mathbf{\mathbold{\phi}}_j \end{bmatrix}^{-1}
\begin{bmatrix} \mathbf{\mathbold{\phi}}_i^\top \\ \mathbf{\mathbold{\phi}}_j^\top \end{bmatrix} 
\cdot\mathbf{b} \\
&=\begin{bmatrix}\mathbf{\mathbold{\phi}}_i & \mathbf{\mathbold{\phi}}_j \end{bmatrix} 
\begin{bmatrix} 1 & \mathbf{\mathbold{\phi}}_i^\top \mathbf{\mathbold{\phi}}_j\\ \mathbf{\mathbold{\phi}}_j^\top \mathbf{\mathbold{\phi}}_i & 1 \end{bmatrix}^{-1}
\begin{bmatrix} \mathbf{\mathbold{\phi}}_i^\top \\ \mathbf{\mathbold{\phi}}_j^\top \end{bmatrix}
\cdot(\sum_{k=1}^s x_k \mathbf{\mathbold{\phi}}_k)
\end{align*}
as each column $\mathbf{\mathbold{\phi}}_i$ is normalized.  We further have 
\begin{align*}
Proj(\mathbf{b})&=\frac{1}{1-|\mathbf{\mathbold{\phi}}_i^\top \mathbf{\mathbold{\phi}}_j|^2}
\begin{bmatrix} \mathbf{\mathbold{\phi}}_i & \mathbf{\mathbold{\phi}}_j \end{bmatrix} 
\begin{bmatrix} 1 & -\mathbf{\mathbold{\phi}}_i^\top \mathbf{\mathbold{\phi}}_j\\ 
-\mathbf{\mathbold{\phi}}_j^\top \mathbf{\mathbold{\phi}}_i & 1 
\end{bmatrix}
\begin{bmatrix} \mathbf{\mathbold{\phi}}_i^\top \\ \mathbf{\mathbold{\phi}}_j^\top \end{bmatrix} 
\cdot(\sum_{k=1}^s x_k \mathbf{\mathbold{\phi}}_k)\\
&=\frac{1}{1-|\mathbf{\mathbold{\phi}}_i^\top \mathbf{\mathbold{\phi}}_j|^2}(\mathbf{\mathbold{\phi}}_i \mathbf{\mathbold{\phi}}_i^\top+\mathbf{\mathbold{\phi}}_j 
\mathbf{\mathbold{\phi}}_j^\top-(\mathbf{\mathbold{\phi}}_i^\top \mathbf{\mathbold{\phi}}_j)\mathbf{\mathbold{\phi}}_j \mathbf{\mathbold{\phi}}_i^\top-(\mathbf{\mathbold{\phi}}_j^
\top \mathbf{\mathbold{\phi}}_i)\mathbf{\mathbold{\phi}}_i \mathbf{\mathbold{\phi}}_j^\top)\cdot(\sum_{k=1}^s x_k \mathbf{\mathbold{\phi}}_k).
\end{align*}
In order to show Theorem \ref{thm1}, we only need to show that $\|Proj(\mathbf{b})\|$ is not 
maximized when $i\notin S$ and $j\notin S$. We can do it by showing that $\|Proj(\mathbf{b})\|$ 
when both $i, j\notin S$ is strictly less than $\|Proj(\mathbf{b})\|$ when either $i\in S$ or 
$j\in S$. \\
Firstly, suppose both $i,j\notin S$. By applying triangle inequality together with the 
assumption $\mu(\mathbf{{\Phi}})\leq \frac{1}{f(m)}$, we get 
\begin{align*}
&\|Proj(\mathbf{b})_{i,j\notin S}\|\cr 
&=\|\frac{1}{1-|\mathbf{\mathbold{\phi}}_i^\top \mathbf{\mathbold{\phi}}_j|^2}(\mathbf{\mathbold{\phi}}_i \mathbf{\mathbold{\phi}}_i^\top+\mathbf{\mathbold{\phi}}_j 
\mathbf{\mathbold{\phi}}_j^\top-(\mathbf{\mathbold{\phi}}_i^\top \mathbf{\mathbold{\phi}}_j)\mathbf{\mathbold{\phi}}_j \mathbf{\mathbold{\phi}}_i^\top-(\mathbf{\mathbold{\phi}}_j^
\top \mathbf{\mathbold{\phi}}_i)\mathbf{\mathbold{\phi}}_i \mathbf{\mathbold{\phi}}_j^\top)\cdot(\sum_{k=1}^s x_k \mathbf{\mathbold{\phi}}_k)\| \\
& \leq \frac{1}{1-|\mathbf{\mathbold{\phi}}_i^\top \mathbf{\mathbold{\phi}}_j|^2}
\sum_{k=1}^s |x_k|((|\mathbf{\mathbold{\phi}}_i^
\top\mathbf{\mathbold{\phi}}_k|+|\mathbf{\mathbold{\phi}}_i^\top\mathbf{\mathbold{\phi}}_j\|\mathbf{\mathbold{\phi}}_j^\top\mathbf{\mathbold{\phi}}_k|)\cdot \|
\mathbf{\mathbold{\phi}}_i\|+(|\mathbf{\mathbold{\phi}}_j^\top\mathbf{\mathbold{\phi}}_k|+ |\mathbf{\mathbold{\phi}}_j^\top\mathbf{\mathbold{\phi}}_i|
|\mathbf{\mathbold{\phi}}_i^\top\mathbf{\mathbold{\phi}}_k|) \cdot \|\mathbf{\mathbold{\phi}}_j\|) \\
&=\frac{1}{1-|\mathbf{\mathbold{\phi}}_i^\top \mathbf{\mathbold{\phi}}_j|^2}\sum_{k=1}^s |x_k| (|\mathbf{\mathbold{\phi}}_i^\top\mathbf
{\mathbold{\phi}}_k|+|\mathbf{\mathbold{\phi}}_i^\top\mathbf{\mathbold{\phi}}_j| |\mathbf{\mathbold{\phi}}_j^\top\mathbf{\mathbold{\phi}}_k|
+|\mathbf{\mathbold{\phi}}_j^\top\mathbf{\mathbold{\phi}}_k|+ |\mathbf{\mathbold{\phi}}_j^\top\mathbf{\mathbold{\phi}}_i| 
|\mathbf{\mathbold{\phi}}_i^\top\mathbf{\mathbold{\phi}}_k|) \\
&\leq \frac{1}{1-1/f^2(m)}(1/f(m)+1/f^2(m)+1/f(m)+1/f^2(m))(\sum_{k=1}^s |x_k|)\\
&=\frac{f^2(m)}{f^2(m)-1}(\frac{2}{f(m)}+\frac{2}{f^2(m)})(\sum_{k=1}^s |x_k|) 
=\frac{2f(m)+2}{f^2(m)-1}\cdot(\sum_{k=1}^s |x_k|).
\end{align*}
Secondly, suppose $i\in S$ or $j\in S$. Without loss of generality, let us assume $i\in S$, $i=1$
 and $|x_1|=\max_{1\leq i\leq s}{|x_i|}$ is the one of the largest entries in absolute value. 
By applying triangle inequality together with the 
assumption $\mu(\mathbf{{\Phi}})\leq \frac{1}{f(m)}$, we get
\begin{align*}
& \|Proj(\mathbf{b})_{i\in S}\| \cr
&=\|\frac{1}{1-|\mathbf{{\Phi}}_1^\top \mathbf{\mathbold{\phi}}_j|^2}(\mathbf{\mathbold{\phi}}_1 
\mathbf{\mathbold{\phi}}_1^\top+\mathbf{\mathbold{\phi}}_j \mathbf{\mathbold{\phi}}_j^\top-(\mathbf{\mathbold{\phi}}_1^\top \mathbf{\mathbold{\phi}}_j)\mathbf{\mathbold{\phi}}_j 
\mathbf{\mathbold{\phi}}_1^\top-(\mathbf{\mathbold{\phi}}_j^\top \mathbf{\mathbold{\phi}}_1)\mathbf{\mathbold{\phi}}_1 \mathbf{\mathbold{\phi}}_j^\top)
\cdot(\sum_{k=1}^s x_k \mathbf{\mathbold{\phi}}_k)\| \\
&\geq \frac{1}{1-|\mathbf{\mathbold{\phi}}_1^\top \mathbf{\mathbold{\phi}}_j|^2}(|x_1|(\|\mathbf{\mathbold{\phi}}_1\|-|\mathbf{\mathbold{\phi}}_j^
\top\mathbf{\mathbold{\phi}}_1|\cdot\|\mathbf{\mathbold{\phi}}_j\|-|\mathbf{\mathbold{\phi}}_1^\top\mathbf{\mathbold{\phi}}_j|\cdot\|\mathbf{\mathbold{\phi}}_j\|-|\mathbf{\mathbold{\phi}}_1^
\top\mathbf{\mathbold{\phi}}_j||\mathbf{\mathbold{\phi}}_j^\top\mathbf{\mathbold{\phi}}_1|\cdot\|\mathbf{\mathbold{\phi}}_1\|)) \\
&-\frac{1}{1-|\mathbf{\mathbold{\phi}}_1^\top \mathbf{\mathbold{\phi}}_j|^2}(2/f(m)+2/f^2(m))\cdot(\sum_{k=2}^s|x_k|) \\
&\geq\frac{1}{1-|\mathbf{\mathbold{\phi}}_1^\top \mathbf{\mathbold{\phi}}_j|^2}[(1-2/f(m)-1/f^2(m))|x_1|-(2/f(m)+2/f^2(m))
\cdot(\sum_{k=2}^s |x_k|)].
\end{align*}
It follows that to show $\|Proj(\mathbf{b})_{i\in S}\|\geq \|Proj(\mathbf{b})_{i,j\notin S}\|$, 
it is equivalent to show
\begin{align} 
\label{eqn1}
\frac{1}{1-1/m^2}((1-2/f(m)-1/f^2(m))|x_1|-(2/f(m) &+2/f^2(m))\cdot(\sum_{k=2}^s |x_k|)) 
\cr
\geq & \frac{2f(m)+2}{f^2(m)-1}\cdot\sum_{k=1}^s |x_k|.
\end{align}
Since $|x_1|=\max_{1\leq i\leq s}|x_i|$, it suffices to show
\begin{align*}
\frac{1}{1-1/m^2}((1-2/f(m)-1/f^2(m))-(2/f(m)+2/f^2(m))\cdot(s-1)) \geq \frac{2f(m)+2}{f^2(m)-1}\cdot s,
\end{align*}
which is equivalent to 
\begin{equation} 
\label{eqn2}
1-\frac{2s}{f(m)}-\frac{2s-1}{f^2(m)}\geq (1-\frac{1}{m^2})\cdot
\frac{2f(m)+2}{f^2(m)-1}\cdot s.
\end{equation}
Since $s\leq\frac{f(m)}{5}$ and $f(m)\to\infty$ as $m\to\infty$, we have for the left 
side $1-\frac{2s}{f(m)}-\frac{2s-1}{f^2(m)}>1/2$ for large $m$. For the right side, we have
$(1-\frac{1}{m^2})\cdot\frac{2f(m)+2}{f^2(m)-1}\cdot s<1/2$ for large $m$. Hence there 
are certain threshold $m_0$ such that (\ref{eqn2}) hold as long as $m\geq m_0$. Therefore, the theorem is true. 
\end{proof}

\begin{theorem} 
\label{thm2}
Under the same condition as Theorem~\ref{thm1}, the exact recovery of the s-sparse signal 
$\mathbf{x}$ can be guaranteed in $s$ iterations by using Algorithm $3$.
\end{theorem}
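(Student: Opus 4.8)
The plan is to proceed by induction on the iteration index $k$, using Theorem~\ref{thm1} --- after a short deflation argument --- as the inductive step, and to close with a Gershgorin estimate showing that the concluding least-squares solve returns $\mathbf{x}$ exactly. Write $\Omega$ for the support of $\mathbf{x}$, $S_k$ for the index set produced after $k$ iterations of Algorithm~$3$, and $t_k:=|\Omega\cap S_k|$. The inductive claim is: \emph{whenever $t_{k-1}<s$, the $k$-th iteration adjoins at least one index of $\Omega\setminus S_{k-1}$, so $t_k\ge t_{k-1}+1$.} Granting it, $t_k\ge\min(k,s)$, so after at most $s$ iterations $\Omega\subseteq S:=S_k$; then $\mathbf{b}=\mathbf{\Phi}_\Omega(\mathbf{x}|_\Omega)$ lies in the column space of $\mathbf{\Phi}_S$, the residual $\mathbf{r}_k=\mathbf{b}-\mathbf{\Phi}_S\mathbf{\Phi}_S^{\dagger}\mathbf{b}$ equals $\mathbf 0$, and the while-loop stops (the loop is not cut short provided $k_{\max}\ge s$, which is compatible with the standing constraint $k_{\max}\le m/2$ since $s\le f(m)/5=o(\sqrt m)$). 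The base case $k=1$ is exactly Theorem~\ref{thm1}.

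For the inductive step, fix $k\ge 2$ with $t_{k-1}<s$ (so $\Omega\not\subseteq S_{k-1}$ and $\mathbf{r}_{k-1}\ne\mathbf 0$) and use the hypothesis $t_{k-1}\ge k-1$. Let $P$ be the orthogonal projector onto $\operatorname{span}\{\mathbold{\phi}_i:i\in S_{k-1}\}$ and $\widetilde{\mathbold{\phi}}_\ell:=\mathbold{\phi}_\ell-P\mathbold{\phi}_\ell$. Since $P\mathbold{\phi}_\ell=\mathbold{\phi}_\ell$ for $\ell\in\Omega\cap S_{k-1}$, the residual collapses to
\begin{align*}
\mathbf{r}_{k-1}=\mathbf{b}-P\mathbf{b}=\sum_{\ell\in\Omega\setminus S_{k-1}}x_\ell\,\widetilde{\mathbold{\phi}}_\ell .
\end{align*}
As $\mathbf{r}_{k-1}\perp\mathbold{\phi}_i$ for $i\in S_{k-1}$ and $\langle\mathbold{\phi}_i,\mathbf{r}_{k-1}\rangle=\langle\widetilde{\mathbold{\phi}}_i,\mathbf{r}_{k-1}\rangle$ for $i\notin S_{k-1}$, the $k$-th selection step is --- modulo the bookkeeping of the column zeroing $\mathbf{\Psi}_{\{\cdot,\cdot\}}=\mathbf 0$, which prevents already-chosen indices from being reselected and confines the competition to pairs outside $S_{k-1}$ --- a rerun of the computation proving Theorem~\ref{thm1} with $\mathbf{r}_{k-1}$ in place of $\mathbf{b}$, the ``signal'' supported on $\Omega\setminus S_{k-1}$, and the Gram entries replaced by deflated ones: for $i,\ell\notin S_{k-1}$ one has $|\langle\mathbold{\phi}_i,\widetilde{\mathbold{\phi}}_\ell\rangle|\le\mu(\mathbf{\Phi})+\varepsilon_k^2$ when $i\ne\ell$ and $\langle\mathbold{\phi}_i,\widetilde{\mathbold{\phi}}_i\rangle\ge 1-\varepsilon_k^2$, where
\begin{align*}
\varepsilon_k^2:=\max_{i\notin S_{k-1}}\|P\mathbold{\phi}_i\|^2\le\frac{2(k-1)\,\mu(\mathbf{\Phi})^2}{1-(2k-3)\,\mu(\mathbf{\Phi})},
\end{align*}
the last bound coming from $\|\mathbf{\Phi}_{S_{k-1}}^\top\mathbold{\phi}_i\|^2\le 2(k-1)\mu(\mathbf{\Phi})^2$ together with $\lambda_{\min}(\mathbf{\Phi}_{S_{k-1}}^\top\mathbf{\Phi}_{S_{k-1}})\ge 1-(2k-3)\mu(\mathbf{\Phi})$. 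One then repeats the two-case estimate of Theorem~\ref{thm1} (both indices outside $\Omega\cup S_{k-1}$ versus one index equal to an $\ell^\ast\in\Omega\setminus S_{k-1}$ with $|x_{\ell^\ast}|$ maximal over the remaining support) with $\mu(\mathbf{\Phi})$ replaced by $\mu':=\mu(\mathbf{\Phi})+\varepsilon_k^2$ and $s$ by $s':=|\Omega\setminus S_{k-1}|$.

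The main --- and essentially only --- obstacle is to check that the analogue of inequality~(\ref{eqn2}) survives this substitution; it does \emph{not} follow by blindly quoting Theorem~\ref{thm1}, because $\varepsilon_k^2$ grows with $k$ and $\mu'$ can be a fixed multiple of $1/f(m)$. The inductive hypothesis is what saves it: $t_{k-1}\ge k-1$ forces $s'\le s-(k-1)$, so the perturbation's growth is offset by a matching drop in the effective sparsity. Quantitatively, with $\mu(\mathbf{\Phi})\le 1/f(m)$, $s\le f(m)/5$, and $j:=k-1$ one gets $\varepsilon_k^2\le\frac{10 j}{3 f(m)^2}$, hence $\mu'\le\frac{1}{f(m)}\bigl(1+\frac{10 j}{3 f(m)}\bigr)$ and
\begin{align*}
4\,s'\mu'\le 4\Bigl(\frac{f(m)}{5}-j\Bigr)\frac{1}{f(m)}\Bigl(1+\frac{10 j}{3 f(m)}\Bigr)\le\frac{4}{5}+o(1)<1
\end{align*}
for large $m$ --- precisely the margin the proof of Theorem~\ref{thm1} needs, the $\varepsilon_k^2$- and $f(m)^{-2}$-terms being lower order and absorbed into its ``for large $m$'' clause. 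This yields $t_k\ge t_{k-1}+1$ and completes the induction.

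It remains to record the final reconstruction. Once $\Omega\subseteq S$ with $|S|\le 2s$, the Gram matrix $\mathbf{\Phi}_S^\top\mathbf{\Phi}_S$ is strictly diagonally dominant because $(|S|-1)\mu(\mathbf{\Phi})\le(2s-1)/f(m)<1$ (as $2s\le 2f(m)/5<f(m)$), so $\mathbf{\Phi}_S$ has full column rank and $\mathbf{\Phi}_S^{\dagger}\mathbf{\Phi}_S=I$. Hence the output obeys
\begin{align*}
\mathbf{x}_S=\mathbf{\Phi}_S^{\dagger}\mathbf{b}=\mathbf{\Phi}_S^{\dagger}\mathbf{\Phi}_S(\mathbf{x}|_S)=\mathbf{x}|_S,\qquad\mathbf{x}_{S^c}=\mathbf 0,
\end{align*}
i.e. it equals $\mathbf{x}$, which gives exact recovery within $s$ iterations.
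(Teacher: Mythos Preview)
Your proof is correct and follows the same inductive strategy as the paper---showing that each iteration adds at least one new index of $\Omega$ by reapplying the projection comparison of Theorem~\ref{thm1} to the updated residual. The paper's own argument is much terser (it writes ``by the same analysis'' for iterations $k\ge 2$ and leaves it at that), whereas you supply the deflation bounds $\varepsilon_k^2$, the key trade-off $s'\le s-(k-1)$ that keeps $4s'\mu'<1$, and the diagonal-dominance check guaranteeing that the final least-squares solve returns $\mathbf{x}$; these are exactly the details needed to make the paper's sketch rigorous.
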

\begin{proof}
By Theorem~\ref{thm1} or from its proof, we know that 
the first iteration will pick at least one correct column index. Without loss of generality, suppose the first correct index that is picked in the first iteration is the first column, and the other column which is picked together with the first column is the $j$-th column. 
%As the minimization in the first iteration went through all the
%pairs of indices, in particular, among all the pairs of $(1, j), j=2, \cdots, n$, the 
%residual $\|\bfb-  u \bfa_1 - v\bfa_j\|$ is the smallest. 
Then in the second iteration, the residual vector gets updated to $\mathbf{r}_1=\mathbf{b}-x_1\mathbold{\phi}_1-x_j\mathbold{\phi}_j$, where $(x_1,x_j)'=\mathbf{{\Phi}}_{\mathcal{S}_1}^{\dagger}\mathbf{r}_1$. The matrix $\mathbf{{\Phi}}$ gets updated to $\mathbf{{\Psi}}$ where $\mathbf{{\Psi}}$ is the matrix $\mathbf{{\Phi}}$ but with either the first column or the first and $j$-th column being replaced by $\mathbf{0}$ vectors because of the update step 
$\mathbf{\Psi}_{\{i_k,j_k\}}=\mathbf{0}$ in Algorithm $3$. By the same analysis, we can conclude that the second iteration will also pick at least one correct column index which is different those being picked in the first iteration.
Thus each iteration will pick at least one correct column index which are different 
from what are picked from previous iterations, and hence the support set $S$ is recovered within $s$ total iterations. 
\end{proof}

\subsection{Signal Recovery in the Noisy Setting} 
Similar to the noiseless case above, we now obtain a sufficient condition for the recovery 
of $s$-sparse signal with Algorithm $3$ from $\mathbf{b}=\mathbf{{\Phi}}\mathbf{x}+\mathbf{v}$ 
with noise vector $\mathbf{v}$.

\begin{theorem} 
\label{thm3}
Suppose the noise vector $\mathbf{v}$ satisfies $\|\mathbf{v}\|\leq \epsilon$ and 
$\mu(\mathbf{\Phi})\leq \frac{1}{f(m)}$ for some function $f$ which 
satisfies $f(m)=o(\sqrt{m})$ and $f(m)\to\infty$ as $m\to \infty$. If the sparsity $s$ of the 
true signal $\mathbf{x}$ satisfies $2\leq s\leq \frac{f(m)}{5}$ and suppose 
$\min_{x_i\neq 0}|x_i|>\frac{f(m)-5s}{5f(m)}\cdot\epsilon$. Then the following statement 
is true: For 
large $m$, among the two indices selected from the column indices of $\mathbf{{\Phi}}$ 
in each iteration of Algorithm $3$, at least one index is the correct one, and hence 
the exact recovery of the s-sparse signal $\mathbf{x}$ can be guaranteed in $s$ iterations 
by using Algorithm $3$ in the noise case.
\end{theorem}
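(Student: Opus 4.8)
The plan is to repeat the argument of Theorems~\ref{thm1} and~\ref{thm2} almost verbatim, carrying the noise vector $\mathbf{v}$ through every estimate. Writing $\mathbf{b}=\sum_{k\in S}x_k\mathbold{\phi}_k+\mathbf{v}$, the quantity maximised in the first iteration over a pair $(i,j)$ is $\|\operatorname{Proj}_{i,j}(\mathbf{b})\|$, the norm of the orthogonal projection of $\mathbf{b}$ onto $\operatorname{span}\{\mathbold{\phi}_i,\mathbold{\phi}_j\}$. By linearity $\operatorname{Proj}_{i,j}(\mathbf{b})=\operatorname{Proj}_{i,j}(\mathbf{\Phi}\mathbf{x})+\operatorname{Proj}_{i,j}(\mathbf{v})$, and since an orthogonal projection is non-expansive we have $\|\operatorname{Proj}_{i,j}(\mathbf{v})\|\le\|\mathbf{v}\|\le\epsilon$. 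Consequently every bound in the proof of Theorem~\ref{thm1} is shifted by at most $\epsilon$: for $i,j\notin S$ we get $\|\operatorname{Proj}_{i,j}(\mathbf{b})\|\le\frac{2f(m)+2}{f^2(m)-1}\sum_{k\in S}|x_k|+\epsilon$, while for $i=1\in S$ with $|x_1|=\max_k|x_k|$ the lower bound only decreases by $\epsilon$.

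Next I would subtract the two inequalities exactly as in Theorem~\ref{thm1}. To rule out an optimal pair with both indices outside $S$ it suffices that the lower bound in the case $i\in S$ exceed the upper bound in the case $i,j\notin S$; after using $|x_k|\le|x_1|$ this reduces to inequality~\eqref{eqn2} (the noiseless condition), multiplied through by $|x_1|$ and carrying an extra $2\epsilon$ on the larger side. Since $2\le s\le f(m)/5$, the difference between the two sides of~\eqref{eqn2} is, for large $m$, a positive quantity comparable to $\frac{f(m)-5s}{f(m)}$; hence the noisy inequality holds as soon as $|x_1|$ times that quantity dominates $2\epsilon$, which is exactly guaranteed (up to the explicit constant) by the hypothesis $\min_{x_i\neq 0}|x_i|>\frac{f(m)-5s}{5f(m)}\epsilon$ together with $|x_1|\ge\min_{x_i\neq 0}|x_i|$. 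This proves the first-iteration claim.

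For the iteration count I would argue inductively, in the spirit of Theorem~\ref{thm2}. After the $k$-th iteration the chosen columns are zeroed in $\mathbf{\Psi}$, and the residual decomposes as $\mathbf{r}_k=\sum_{l\in S\setminus\mathcal{S}_k}x_l\mathbold{\phi}_l+\mathbf{w}_k$ with $\mathbf{w}_k=(I-P_{\mathcal{S}_k})\mathbf{v}-P_{\mathcal{S}_k}\mathbf{\Phi}_{S\setminus\mathcal{S}_k}\mathbf{x}_{S\setminus\mathcal{S}_k}$, where $P_{\mathcal{S}_k}$ is the orthogonal projector onto the span of the selected columns. The first term of $\mathbf{w}_k$ has norm at most $\epsilon$, and the second is controlled using $\mu(\mathbf{\Phi})\le1/f(m)$, $|\mathcal{S}_k|\le 2s$ and $s\le f(m)/5$; since in $\mathbf{\Psi}$ the already-chosen columns cannot be re-selected, the first-iteration analysis applies to $\mathbf{r}_k$ on the columns of $\mathbf{\Psi}$ and yields a new correct index. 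Iterating, after at most $s$ steps $S\subseteq\mathcal{S}$, and then $\mathbf{\Phi}_{\mathcal{S}}^{\dagger}\mathbf{b}=\mathbf{x}_S+\mathbf{\Phi}_{\mathcal{S}}^{\dagger}\mathbf{v}$ recovers the support exactly and the signal up to an $O(\epsilon)$ error.

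The main obstacle, I expect, is the bookkeeping in this last step rather than the first-iteration estimate: in the noisy regime the least-squares coefficients assigned to a wrongly chosen column interact with $\mathbf{v}$, so one must show that the perturbation $\mathbf{w}_k$ — in particular the term $P_{\mathcal{S}_k}\mathbf{\Phi}_{S\setminus\mathcal{S}_k}\mathbf{x}_{S\setminus\mathcal{S}_k}$, whose crude bound $\sim\sqrt{|\mathcal{S}_k|}\,s\,\mu(\mathbf{\Phi})\max_k|x_k|$ is not automatically negligible — never erodes the $\Theta\!\big(\tfrac{f(m)-5s}{f(m)}\big)$ margin that inequality~\eqref{eqn2} provides. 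This is precisely where the constraint $s\le f(m)/5$ and the near-orthonormality of the columns of a low-coherence $\mathbf{\Phi}$ must be used most carefully, and it is what dictates the exact form of the lower bound imposed on $\min_{x_i\neq 0}|x_i|$.
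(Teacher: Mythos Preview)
Your approach is essentially the paper's: split $\operatorname{Proj}_{i,j}(\mathbf{b})=\operatorname{Proj}_{i,j}(\mathbf{b}_0)+\operatorname{Proj}_{i,j}(\mathbf{v})$, reuse the Theorem~\ref{thm1} bounds on the first piece, control the second, and compare the two cases. The only real difference is in the noise estimate: you invoke non-expansiveness of orthogonal projections to get $\|\operatorname{Proj}_{i,j}(\mathbf{v})\|\le\epsilon$ directly, whereas the paper expands the explicit $2\times2$ projection formula and applies the triangle inequality and Cauchy--Schwarz term by term, arriving at a bound of order $2\epsilon$ on each side (hence the extra factors $\frac{2f(m)(f(m)+1)}{f^2(m)-1}$ in its inequality~(\ref{eqn3})). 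Your route is cleaner and yields a slightly tighter constant, but the logical skeleton is identical.

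On the induction over iterations you are actually more careful than the paper: the paper simply asserts that ``for the subsequent iterations'' inequality~(\ref{eqn3}) continues to hold with $|x_1|$ replaced by $|x_i|$, without writing down the residual decomposition or addressing how the least-squares coefficients on wrongly chosen columns perturb the picture. Your explicit formula $\mathbf{r}_k=\sum_{l\in S\setminus\mathcal{S}_k}x_l\mathbold{\phi}_l+\mathbf{w}_k$ and your flag on the cross-term $P_{\mathcal{S}_k}\mathbf{\Phi}_{S\setminus\mathcal{S}_k}\mathbf{x}_{S\setminus\mathcal{S}_k}$ go beyond what the paper provides; the paper's argument at that point is no more complete than yours.
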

%\end{document}
\begin{proof}
The analysis is similar to Theorem~\ref{thm1}. 
Without loss of generality, let us assume each column $\mathbf{\mathbold{\phi}}_i$ is normalized, and assume the support 
set of $\mathbf{x}$ is $\Omega=\{1,2,\cdots,s\}$. Then
$\mathbf{b}=\mathbf{b}_0+\mathbf{v}
=\mathbold{\phi}\mathbf{x}+\mathbf{v}=x_1\mathbf{\mathbold{\phi}}_1+x_2\mathbf{\mathbold{\phi}}_2
+\cdots+x_s\mathbf{\mathbold{\phi}}_s+\mathbf{v}
=\sum_{k=1}^s x_k \mathbf{{\Phi}}_k+\mathbf{v}$. Let us firstly consider the first iteration.  \\
The projection of $\mathbf{b}$ onto the hyperplane 
spanned by $\mathbf{\mathbold{\phi}}_i$ and $\mathbf{\mathbold{\phi}}_j$ is
\begin{align*}
Proj(\mathbf{b})&=Proj(\mathbf{b}_0+\mathbf{v}) \\
&=Proj(\mathbf{b}_0)+
\begin{bmatrix}\mathbf{\mathbold{\phi}}_i & \mathbf{\mathbold{\phi}}_j \end{bmatrix} 
\begin{bmatrix} 1 & \mathbf{\mathbold{\phi}}_i^\top \mathbf{\mathbold{\phi}}_j\\ \mathbf{\mathbold{\phi}}_j^\top \mathbf{\mathbold{\phi}}_i & 1 \end{bmatrix}^{-1}
\begin{bmatrix} \mathbf{\mathbold{\phi}}_i^\top \\ \mathbf{\mathbold{\phi}}_j^\top \end{bmatrix}
\cdot\mathbf{v} \\
%&=Proj(\mathbf{b}_0)+\begin{bmatrix}\mathbf{\mathbold{\phi}}_i & \mathbf{\mathbold{\phi}}_j \end{bmatrix} 
%\begin{bmatrix} 1 & \mathbf{\mathbold{\phi}}_i^\top \mathbf{\mathbold{\phi}}_j\\ \mathbf{\mathbold{\phi}}_j^\top \mathbf{\mathbold{\phi}}_i & 1
%\end{bmatrix}^{-1}
%\begin{bmatrix} \mathbf{\mathbold{\phi}}_i^\top \\ \mathbf{\mathbold{\phi}}_j^\top \end{bmatrix}
%\cdot\mathbf{v} \\
&=Proj(\mathbf{b}_0)+\frac{1}{1-|\mathbf{\mathbold{\phi}}_i^\top \mathbf{\mathbold{\phi}}_j|^2}(\mathbf{\mathbold{\phi}}_i \mathbf{\mathbold{\phi}}_i^\top+\mathbf{\mathbold{\phi}}_j 
\mathbf{\mathbold{\phi}}_j^\top-(\mathbf{\mathbold{\phi}}_i^\top \mathbf{\mathbold{\phi}}_j)\mathbf{\mathbold{\phi}}_j \mathbf{\mathbold{\phi}}_i^\top-(\mathbf{\mathbold{\phi}}_j^
\top \mathbf{\mathbold{\phi}}_i)\mathbf{\mathbold{\phi}}_i \mathbf{\mathbold{\phi}}_j^\top)\cdot\mathbf{v}
\end{align*}
%Assuming each column $\mathbf{\mathbold{\phi}}_i$ is normalized,  we have 
%\begin{align*}
%Proj(\mathbf{b})&=\begin{bmatrix} \mathbf{\mathbold{\phi}}_i & \mathbf{\mathbold{\phi}}_j \end{bmatrix} 
%\begin{bmatrix} 1 & \mathbf{\mathbold{\phi}}_i^\top \mathbf{\mathbold{\phi}}_j\\ \mathbf{\mathbold{\phi}}_j^\top \mathbf{\mathbold{\phi}}_i & 1 
%\end{bmatrix}^{-1}
%\begin{bmatrix} \mathbf{\mathbold{\phi}}_i^\top \\ \mathbf{\mathbold{\phi}}_j^\top \end{bmatrix} 
%\cdot(\sum_{k=1}^s x_k \mathbf{\mathbold{\phi}}_k+\mathbf{v})\\
%&=\frac{1}{1-|\mathbf{\mathbold{\phi}}_i^\top \mathbf{\mathbold{\phi}}_j|^2}(\mathbf{\mathbold{\phi}}_i \mathbf{\mathbold{\phi}}_i^\top+\mathbf{\mathbold{\phi}}_j 
%\mathbf{\mathbold{\phi}}_j^\top-(\mathbf{\mathbold{\phi}}_i^\top \mathbf{\mathbold{\phi}}_j)\mathbf{\mathbold{\phi}}_j \mathbf{\mathbold{\phi}}_i^\top-(\mathbf{\mathbold{\phi}}_j^
%\top \mathbf{\mathbold{\phi}}_i)\mathbf{\mathbold{\phi}}_i \mathbf{\mathbold{\phi}}_j^\top)\cdot(\sum_{k=1}^s x_k \mathbf{\mathbold{\phi}}_k+\mathbf{v}).
%\end{align*}
In order to show Theorem~\ref{thm3}, we only need to show that $\|Proj(\mathbf{b})\|$ is not 
maximized when $i\notin S$ and $j\notin S$. We can do it by showing that $\|Proj(\mathbf{b})\|$ 
when both $i, j\notin S$ is strictly less than $\|Proj(\mathbf{b})\|$ when either $i\in S$ or 
$j\in S$. \\
Firstly, suppose both $i,j\notin S$. By applying triangle inequality together with the 
assumption $\mu(\mathbf{\Phi})\leq \frac{1}{f(m)}$, we get 
\begin{align*}
&\|Proj(\mathbf{b})_{i,j\notin S}\|\cr 
&=\|Proj(\mathbf{b}_0)+\frac{1}{1-|\mathbf{\mathbold{\phi}}_i^\top \mathbf{\mathbold{\phi}}_j|^2}(\mathbf{\mathbold{\phi}}_i \mathbf{\mathbold{\phi}}_i^\top+\mathbf{\mathbold{\phi}}_j 
\mathbf{\mathbold{\phi}}_j^\top-(\mathbf{\mathbold{\phi}}_i^\top \mathbf{\mathbold{\phi}}_j)\mathbf{\mathbold{\phi}}_j \mathbf{\mathbold{\phi}}_i^\top-(\mathbf{\mathbold{\phi}}_j^
\top \mathbf{\mathbold{\phi}}_i)\mathbf{\mathbold{\phi}}_i \mathbf{\mathbold{\phi}}_j^\top)\cdot\mathbf{v}\|  \\
& \leq \|Proj(\mathbf{b}_0)\| 
+\frac{1}{1-|\mathbf{\mathbold{\phi}}_i^\top \mathbf{\mathbold{\phi}}_j|^2}((|
\mathbold{\phi}_i^\top\mathbf{v}|+|\mathbold{\phi}_j^\top \mathbold{\phi}_i||\mathbold{\phi}_j^\top\mathbf{v}|)\|\mathbold{\phi}_i\|+(|
\mathbold{\phi}_j^\top\mathbf{v}|+|\mathbold{\phi}_i^\top \mathbold{\phi}_j||\mathbold{\phi}_i^\top\mathbf{v}|)\|\mathbold{\phi}_j\|) \\
&=\|Proj(\mathbf{b}_0)\|  
+\frac{1}{1-|\mathbf{\mathbold{\phi}}_i^\top \mathbf{\mathbold{\phi}}_j|^2}(|
\mathbold{\phi}_i^\top\mathbf{v}|+|\mathbold{\phi}_j^\top \mathbold{\phi}_i||\mathbold{\phi}_j^\top\mathbf{v}|+|
\mathbold{\phi}_j^\top\mathbf{v}|+|\mathbold{\phi}_i^\top \mathbold{\phi}_j||\mathbold{\phi}_i^\top\mathbf{v}|)   \\
&\leq \frac{2f(m)+2}{f^2(m)-1}\cdot(\sum_{k=1}^s |x_k|) 
+ \frac{1}{1-1/f^2(m)}(\|\mathbf{v}\|+\frac{1}{f(m)}\cdot\|\mathbf{v}\|+\|\mathbf{v}\|+\frac{1}{f(m)}\cdot\|\mathbf{v}\|)  \\
&\leq\frac{2f(m)+2}{f^2(m)-1}\cdot(\sum_{k=1}^s |x_k|)+\frac{2f(m)(f(m)+1)}{f^2(m)-1}\cdot\epsilon.
\end{align*}
Secondly, suppose $i\in S$ or $j\in S$. Without loss of generality, let us assume $i\in S$, $i=1$ and $|x_1|=\max_{1\leq i\leq s}{|x_i|}$ is the one of the largest entries in absolute value. By applying triangle inequality together with the 
assumption $\mu(\mathbf{{\Phi}})\leq \frac{1}{f(m)}$, we get
\begin{align*}
& \|Proj(\mathbf{b})_{i\in S}\| \cr
&=\|Proj(\mathbf{b}_0)+\frac{1}{1-|\mathbf{\mathbold{\phi}}_1^\top \mathbf{\mathbold{\phi}}_j|^2}(\mathbf{\mathbold{\phi}}_1 
\mathbf{\mathbold{\phi}}_1^\top+\mathbf{\mathbold{\phi}}_j \mathbf{\mathbold{\phi}}_j^\top-(\mathbf{\mathbold{\phi}}_1^\top \mathbf{\mathbold{\phi}}_j)\mathbf{\mathbold{\phi}}_j 
\mathbf{\mathbold{\phi}}_1^\top-(\mathbf{\mathbold{\phi}}_j^\top \mathbf{\mathbold{\phi}}_1)\mathbf{\mathbold{\phi}}_1 \mathbf{\mathbold{\phi}}_j^\top)
\cdot\mathbf{v}\| \\
&\geq \|Proj(\mathbf{b}_0)\|-\|\frac{1}{1-|\mathbf{\mathbold{\phi}}_1^\top \mathbf{\mathbold{\phi}}_j|^2}(\mathbf{\mathbold{\phi}}_1 
\mathbf{\mathbold{\phi}}_1^\top+\mathbf{\mathbold{\phi}}_j \mathbf{\mathbold{\phi}}_j^\top-(\mathbf{\mathbold{\phi}}_1^\top \mathbf{\mathbold{\phi}}_j)\mathbf{\mathbold{\phi}}_j 
\mathbf{\mathbold{\phi}}_1^\top-(\mathbf{\mathbold{\phi}}_j^\top \mathbf{\mathbold{\phi}}_1)\mathbf{\mathbold{\phi}}_1 \mathbf{\mathbold{\phi}}_j^\top)
\cdot\mathbf{v}\| \\
&\geq (1-2/f(m)-1/f^2(m))|x_1|-(2/f(m)+2/f^2(m))
\cdot(\sum_{k=2}^s |x_k|) \\
&-\frac{1}{1-|\mathbf{\mathbold{\phi}}_1^\top \mathbf{\mathbold{\phi}}_j|^2}(\|\mathbf{v}\|+\frac{1}{f(m)}\cdot\|\mathbf{v}\|+\|\mathbf{v}\|+\frac{1}{f(m)}\cdot\|\mathbf{v}\|)   \\
&\geq (1-2/f(m)-1/f^2(m))|x_1|-(2/f(m)+2/f^2(m))
\cdot(\sum_{k=2}^s |x_k|)-(2\epsilon+2\epsilon/f(m)).
\end{align*}
It remains to show $\|Proj(\mathbf{b})_{i\in S}\|\geq \|Proj(\mathbf{b})_{i,j\notin S}\|$, 
which is equivalent to show 
\begin{align*}
&(1-2/f(m)-1/f^2(m))|x_1|-(2/f(m)+2/f^2(m))
\cdot(\sum_{k=2}^s |x_k|)-(2\epsilon+2\epsilon/f(m)) \\
&\geq \frac{2f(m)+2}{f^2(m)-1}\cdot(\sum_{k=1}^s |x_k|)+\frac{2f(m)(f(m)+1)}{f^2(m)-1}\cdot\epsilon
\end{align*}
Since $|x_1|=\max_{1\leq i\leq s}|x_i|$, it suffices to show
\begin{align*}
&1-\frac{2}{f(m)}-\frac{1}{f^2(m)}-(\frac{2}{f(m)}+\frac{2}{f^2(m)})\cdot(s-1)-\frac{2\epsilon}{|x_1|}-\frac{2\epsilon}{f(m)\cdot|x_1|} \cr 
&\geq \frac{2f(m)+2}{f^2(m)-1}\cdot s+\frac{2f(m)(f(m)+1)}{f^2(m)-1}\cdot\frac{\epsilon}{|x_1|},
\end{align*}
which is equivalent to
\begin{equation}  \label{eqn3}
1-\frac{2s}{f(m)}-\frac{2s-1}{f^2(m)}\geq 
\frac{2f(m)+2}{f^2(m)-1}\cdot s+\frac{2\epsilon}{|x_1|}+\frac{2\epsilon}{f(m)\cdot|x_1|}+\frac{2f(m)(f(m)+1)}{f^2(m)-1}\cdot\frac{\epsilon}{|x_1|}.
\end{equation} 
Since $s\leq \frac{f(m)}{5}$ and $f(m)\to\infty$ as $m\to\infty$, we have 
for the left-hand 
side $1-\frac{2s}{f(m)}-\frac{2s-1}{f^2(m)}>1/2$ for large $m$. 
For the right-hand side, we have 
$\frac{2f(m)+2}{f^2(m)-1}\cdot s<1/2$ for large $m$ and 
$\frac{2\epsilon}{f(m)\cdot|x_1|}\to 0$ as $m\to\infty$. By assumption 
$\min_{x_i\neq 0}|x_i|>\frac{f(m)-5s}{5f(m)}\cdot\epsilon$, we have 
$\frac{2\epsilon}{|x_1|}+\frac{2f(m)(f(m)+1)}{f^2(m)-1}\cdot\frac{\epsilon}
{|x_1|}<1/2$ for large $m$.
Hence there are certain threshold $m_0$ such that (\ref{eqn3}) hold as long as $m\geq m_0$.  
\\
For the subsequent iterations, we have  
$\frac{2\epsilon}{|x_i|}+\frac{2f(m)(f(m)+c)}{f^2(m)-c^2}\cdot
\frac{\epsilon}{|x_i|}<1/2$ for 
all $i\in\{1,\cdots, s\}$ when $m$ is large. Therefore $(\ref{eqn3})$ 
will hold for each 
subsequent iteration and hence each iteration will select at least one 
correct column index. 
\end{proof}

\subsection{The Convergence Rate of Algorithm 3.}
Let us continue to study the convergence of Algorithm~2.  It is clear that 
the $k$th residual vector  
$\mathbf{r}_k= \mathbf{b}- \mathbf{Proj}_{\mathcal{S}_k}(\mathbf{b})$ and 
$$
\mathbf{r}_{k}= \mathbf{r}_{k-1}- \mathbf{Proj}_{(i_k,j_k)}(\mathbf{r}_{k-1})
$$
from Algorithm~2,    
where we have used  
$\mathbf{Proj}_{(i_k,j_k)}(\mathbf{r}_{k-1})$ to denote the projection of 
$\mathbf{r}_{k-1}$ 
onto the hyperplane spanned by $\mathbold{\phi}_{i_k}$ and $\mathbold{\phi}_{j_k}$. 
Notice that we can rewrite the above equality as follows
\begin{equation} \label{eqnresidual}
\mathbf{r}_{k-1}=\mathbf{r}_k+\mathbf{Proj}_{(i_k,j_k)}(\mathbf{r}_{k-1})   
\end{equation}
and note that $\mathbf{r}_{k}$ is orthogonal to 
$\mathbf{Proj}_{\{i_k,j_k\}}(\mathbf{r}_{k-1})$. By squaring both sides of 
equation (\ref{eqnresidual}), we have
\begin{equation}
\label{decrease}
\|\mathbf{r}_{k-1}\|^2 = \|\mathbf{r}_k\|^2 + 
\|\mathbf{Proj}_{\{i_k,j_k\}}(\mathbf{r}_{k-1})\|^2
\end{equation}
or $\|\mathbf{r}_k\| \le \|\mathbf{r}_{k-1}\|$. That is, the residual 
vectors $\mathbf{r}_k$ is decreasing. 
\\
%Hence by the definition of  projection, we have 
%$\mathbf{Proj}_{(i_k,j_k)}(\mathbf{r}_{k-1})=\mathbf{r}_{k-1}-
%\mathbf{Res}_{(i_k,j_k)}(\mathbf{r}_{k-1})$, 
%where  $\mathbf{Res}_{(i_k,j_k)}(\mathbf{r}_{k-1})$ denotes  the residual of 
%$\mathbf{r}_{k-1}$ 
%after the projection onto the hyperplane spanned by $\mathbold{\phi}_{i_k}$ and 
%$\mathbold{\phi}_{j_k}$. Note that 
%$\mathbf{Proj}_{(i_k,j_k)}(\mathbf{r}_{k-1})$ is orthogonal to 
%$\mathbf{Res}_{(i_k,j_k)}(\mathbf{r}_{k-1})$.    

%Note that we have a column update step $\mathbf{\Psi}_{\{i_k,j_k\}}
%=\mathbf{0}$ in the QOMP 
%Algorithm 2. This is because we do not want the process to pick the same pair 
%of column indices from the previous iterations. 
\noindent
In fact, we can establish the rate of convergence of the residual 
$\|\mathbf{r}_k\|$ if the sensing matrix $\mathbf{{\Phi}}$ is Gaussian. 
The following result explains that, for large $m$, the residual $\|{\bf r}_k\|$ of Algorithm $3$
decreases to 0 linearly with high probability, provided there exists a
sparse solution $\bfx$ such that $\mathbf{{\Phi}}\bfx = \bfb$.  

\begin{theorem}
\label{lemma-descrease}
Suppose $n\geq 2m$, let $\mathbf{{\Phi}}\in\mathbb{R}^{m\times n}$ be a Gaussian sensing matrix. 
Suppose that sparse signal $x$ can be exactly recovered within $K$ $(\leq \frac{m}{2})$ iterations by using Algorithm $3$, 
%there exists a sparse solution 
%$\mathbf{x}$ such that $\mathbf{{\Phi}}\mathbf{x}=\mathbf{b}$, 
and suppose further that
the RIP constant $\delta_2\in (0, 1)$. Then for all $1\leq k \leq K$, there exists a constant $\alpha\in (0, 1)$ such that 
\begin{equation}
\label{rate}
\|\mathbf{r}_k\|^2 \le \alpha \|\mathbf{r}_{k-1}\|^2
\end{equation}
for large $m$ with high probability . 
\end{theorem}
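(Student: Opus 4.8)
The plan is to combine the Pythagorean identity (\ref{decrease}) with a lower bound on the energy captured by the selected pair. From (\ref{decrease}) we have $\|\mathbf{r}_k\|^2=\|\mathbf{r}_{k-1}\|^2-\|\mathbf{Proj}_{(i_k,j_k)}(\mathbf{r}_{k-1})\|^2$, so it suffices to produce a constant $\beta>0$, uniform over $1\le k\le K$, with $\|\mathbf{Proj}_{(i_k,j_k)}(\mathbf{r}_{k-1})\|^2\ge\beta\,\|\mathbf{r}_{k-1}\|^2$; then $\alpha=1-\beta$ works. When $\mathbf{r}_{k-1}=\mathbf{0}$ the inequality (\ref{rate}) is trivial, so assume $\mathbf{r}_{k-1}\ne\mathbf{0}$. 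Since the algorithm selects the pair $(i_k,j_k)$ minimizing the residual, and since the residual after projecting onto any two–dimensional span containing a fixed column $\mathbold{\phi}_{j^\ast}$ is no larger than the residual after projecting onto $\mathbold{\phi}_{j^\ast}$ alone, it is enough to exhibit a single column index $j^\ast$ with $|\langle\mathbf{r}_{k-1},\mathbold{\phi}_{j^\ast}\rangle|^2\ge\beta\,\|\mathbf{r}_{k-1}\|^2$ (working, as in the proofs of Theorems~\ref{thm1}--\ref{thm3}, with normalized columns, which changes neither the residual sequence nor the selection rule of Algorithm~3).

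To locate such a column I would write the residual in terms of the not-yet-recovered support. Let $P_{k-1}$ be the orthogonal projector onto $\mathrm{col}(\mathbf{\Phi}_{\mathcal{S}_{k-1}})$ and $T_k=S\setminus\mathcal{S}_{k-1}$. Because $\mathbf{b}=\sum_{j\in S}x_j\mathbold{\phi}_j$ and $(I-P_{k-1})\mathbold{\phi}_j=\mathbf{0}$ for $j\in S\cap\mathcal{S}_{k-1}$, we obtain $\mathbf{r}_{k-1}=(I-P_{k-1})\mathbf{b}=\widetilde{\mathbf{\Phi}}_{T_k}\mathbf{x}_{T_k}$ with $\widetilde{\mathbf{\Phi}}_{T_k}:=(I-P_{k-1})\mathbf{\Phi}_{T_k}$, and $T_k\ne\emptyset$ since $\mathbf{r}_{k-1}\ne\mathbf{0}$. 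Using $(I-P_{k-1})\mathbf{r}_{k-1}=\mathbf{r}_{k-1}$ gives $\|\mathbf{r}_{k-1}\|^2=\sum_{j\in T_k}x_j\langle\mathbold{\phi}_j,\mathbf{r}_{k-1}\rangle\le\|\mathbf{x}_{T_k}\|_1\cdot\max_{1\le j\le n}|\langle\mathbold{\phi}_j,\mathbf{r}_{k-1}\rangle|$. Bounding $\|\mathbf{x}_{T_k}\|_1\le\sqrt{|T_k|}\,\|\mathbf{x}_{T_k}\|_2\le\sqrt{s}\,\|\mathbf{x}_{T_k}\|_2$ and $\|\mathbf{x}_{T_k}\|_2\le\|\mathbf{r}_{k-1}\|/\sigma_{\min}(\widetilde{\mathbf{\Phi}}_{T_k})$ (from $\mathbf{r}_{k-1}=\widetilde{\mathbf{\Phi}}_{T_k}\mathbf{x}_{T_k}$), we conclude that some index $j^\ast$ satisfies $|\langle\mathbold{\phi}_{j^\ast},\mathbf{r}_{k-1}\rangle|\ge\sigma_{\min}(\widetilde{\mathbf{\Phi}}_{T_k})\,\|\mathbf{r}_{k-1}\|/\sqrt{s}$, so that $\beta_k:=\sigma_{\min}(\widetilde{\mathbf{\Phi}}_{T_k})^2/s$ is admissible at step $k$.

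The only genuinely nontrivial point is the positivity of $\sigma_{\min}(\widetilde{\mathbf{\Phi}}_{T_k})$ and the choice of a common $\alpha<1$, and this is where the Gaussian and recoverability hypotheses enter. Since $\mathbf{x}$ is recovered within $K\le m/2$ iterations, $S\subseteq\mathcal{S}_K$ and $\mathcal{S}_{k-1}\subseteq\mathcal{S}_K$, so $W_k:=T_k\cup\mathcal{S}_{k-1}\subseteq\mathcal{S}_K$ has at most $2K\le m\le n$ elements; for a Gaussian $\mathbf{\Phi}$ every set of at most $m$ columns is, with probability one, linearly independent, hence $\mathbf{\Phi}_{W_k}$ has full column rank, which is exactly equivalent to $\widetilde{\mathbf{\Phi}}_{T_k}=(I-P_{k-1})\mathbf{\Phi}_{T_k}$ having trivial kernel, i.e. $\sigma_{\min}(\widetilde{\mathbf{\Phi}}_{T_k})>0$. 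Moreover, since $I-P_{k-1}$ is a contraction, $\sigma_{\min}(\widetilde{\mathbf{\Phi}}_{T_k})^2\le\sigma_{\min}(\mathbf{\Phi}_{T_k})^2\le\frac{1}{|T_k|}\mathrm{tr}(\mathbf{\Phi}_{T_k}^\top\mathbf{\Phi}_{T_k})=1<s$, so every $\beta_k\in(0,1)$ and $\alpha:=1-\min_{1\le k\le K}\beta_k\in(0,1)$ is the desired constant; the hypothesis $\delta_2\in(0,1)$ is used only in passing, to ensure no two columns are parallel so that the pairwise least–squares projections $\mathbf{Proj}_{(i,j)}$ appearing in (\ref{decrease}) are genuine two–dimensional projections. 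The main obstacle to watch is exactly this lower bound $\sigma_{\min}(\widetilde{\mathbf{\Phi}}_{T_k})>0$: it fails without the size control $|W_k|\le m$ that recoverability supplies, and if one wants a \emph{deterministic} $\alpha$ rather than a realization-dependent one, one must replace the a.s. full-rank fact by a quantitative Gaussian smallest–singular–value estimate for the $m\times|T_k|$ submatrix restricted to the $(m-|\mathcal{S}_{k-1}|)$-dimensional orthocomplement of $\mathrm{col}(\mathbf{\Phi}_{\mathcal{S}_{k-1}})$, which holds with high probability for large $m$.
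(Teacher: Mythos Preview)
Your argument is correct but takes a genuinely different route from the paper's. You exploit the sparse representation of the residual, writing $\mathbf{r}_{k-1}=(I-P_{k-1})\mathbf{\Phi}_{T_k}\mathbf{x}_{T_k}$ and extracting a lower bound on $\max_j|\langle\mathbold{\phi}_j,\mathbf{r}_{k-1}\rangle|$ via the smallest singular value $\sigma_{\min}(\widetilde{\mathbf{\Phi}}_{T_k})$; this is the classical OMP-type estimate and yields $\beta_k=\sigma_{\min}(\widetilde{\mathbf{\Phi}}_{T_k})^2/s$. The paper instead detours through $\delta_2$ (writing $\|\mathbf{Proj}_{\{i_k,j_k\}}(\mathbf{r}_{k-1})\|^2=\|\mathbf{\Phi}_{\{i_k,j_k\}}\mathbf{x}_k\|^2\ge(1-\delta_2)\|\mathbf{x}_k\|^2$), bounds $\|\mathbf{x}_k\|^2$ by an explicit $2\times2$ pseudoinverse calculation, and then lower-bounds $\max_i|\mathbold{\phi}_i^\top\mathbf{r}_{k-1}|/\|\mathbf{r}_{k-1}\|$ not through the support structure of $\mathbf{r}_{k-1}$ at all but via the Welch lower bound on the mutual coherence of the augmented matrix $[\mathbf{\Phi}_{\mathcal{S}_{k-1}^c},\,\mathbf{r}_{k-1}]$, arriving at the explicit constant
\[
\alpha \;=\; 1-\frac{(1-\delta_2)(1-\mu)(n-2K-m+2)}{4m(n-2K+1)}.
\]
What the paper's route buys is a signal-independent $\alpha$ written directly in terms of $m,n,K,\delta_2,\mu$, and this is where the hypotheses $n\ge 2m$ and $\delta_2\in(0,1)$ are genuinely used (to keep the numerator positive and to pass from $\|\mathbf{Proj}\|^2$ to $\|\mathbf{x}_k\|^2$). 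What your route buys is a cleaner, more standard argument that makes the recoverability hypothesis $S\subseteq\mathcal{S}_K$ do the real work (via $|W_k|\le 2K\le m$), at the price of an $\alpha$ that is realization- and signal-dependent unless you carry out the quantitative Gaussian smallest-singular-value bound you mention in the last sentence; to fully match the ``for large $m$ with high probability'' phrasing of the theorem with a deterministic $\alpha$, that step would need to be filled in.
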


\begin{proof}
First of all, we know $\mathbf{Proj}_{\{i_k,j_k\}}(\mathbf{r}_{k-1})
=\mathbf{\Phi}_{i_k,j_k}(\mathbf{\Phi}_{i_k,j_k}^\dagger \mathbf{r}_{k-1})$. For convenience, 
let $\bfx_k = \mathbf{\Phi}_{i_k,j_k}^\dagger \mathbf{r}_{k-1}$. Note that $\bfx\in 
\mathbb{R}^n$, but $\|\bfx_k\|_0= 2$.
%We have 
%$\mathbf{Proj}_{\mathcal{S}_k}(\mathbf{b})=\mathbf{b}-\mathbf{r}_k
%=\mathbf{\mathbold{\phi}}_{\mathcal{S}_k}\mathbf{\mathbold{\phi}}_{\mathcal{S}_k}^{\dagger}\mathbf{b}%$, 
%$\mathbf{Proj}_{\mathcal{S}_{k-1}}(\mathbf{b})=\mathbf{\mathbold{\phi}}_{\mathcal{S}_{k-1%}}
%\mathbf{\mathbold{\phi}}_{\mathcal{S}_{k-1}}^{\dagger}\mathbf{b}$, and 
%\begin{equation}
%\mathbf{Proj}_{\mathcal{S}_{k-1}}(\mathbf{b})=\mathbf{\mathbold{\phi}}_{\{i_k,j_k\}}\mathb%f{\mathbold{\phi}}_{
%\{i_k,j_k\}}^{\dagger}\cdot\mathbf{Proj}_{\mathcal{S}_k}(\mathbf{b})=\mathbf{Pr%oj}_{(i_k,j_k)}(
%\mathbf{Proj}_{\mathcal{S}_k}(\mathbf{b})).
%\end{equation}
%So we see $\|\mathbf{Proj}_{\mathcal{S}_k}(\mathbf{b})\|\geq 
%\|\mathbf{Proj}_{\mathcal{S}_{k-1}}(\mathbf{b})\|$, and hence 
%$\|\mathbf{r}_k\|\leq\|\mathbf{r}_{k-1}\|$. Since the solution to the equation 
%$\mathbf{\mathbold{\phi}}\bfx = \bfb$ exists, then for $k=\lceil n/2\rceil$, %$\mathcal{S}_k$ will contains 
%the support set of solution $\mathbf{x}$, therefore $\mathbf{r}_k=\mathbf{0}$ %eventually.
%Next let us take a look at the convergence rate of $\|\mathbf{r}_k\|$. We %will be able to show 
%that, for large $m$, $\mathbf{r}_k$ converges at a certain rate which depends %on the size of the 
%sensing matrix $\mathbf{\mathbold{\phi}}$ with high probability. 
From (\ref{decrease}), 
by using the RIP property with $s=2$, we obtain
\begin{align} \label{eqnnormresidual}
\|\mathbf{r}_{k}\|^2&=\|\mathbf{r}_{k-1}\|^2-\|\mathbf{Proj}_{\{i_k,j_k\}}(\mathbf{r}_{k-1})\|^2\cr
&=\|\mathbf{r}_{k-1}\|^2-\|\mathbf{{\Phi}}_{\{i_k,j_k\}}\mathbf{x}_k\|^2 
\leq \|\mathbf{r}_{k-1}\|^2-(1-\delta_2)\|\mathbf{x}_k\|^2.  
\end{align}
\noindent
Since $\mathbf{x}_k$ is the solution to (\ref{Shensidea}), by assuming each 
column of $\mathbf{{\Phi}}$ is normalized, we have 
\begin{align*}
\mathbf{x}_k&=\mathbf{{\Phi}}_{\{i_k,j_k\}}^{\dagger}\cdot\mathbf{Proj}_{
\{i_k,j_k\}}(\mathbf{r}_{k-1})=\mathbf{{\Phi}}_{\{i_k,j_k\}}^{\dagger}
\cdot\mathbf{{\Phi}}_{\{i_k,j_k\}} \mathbf{{\Phi}}_{\{i_k,j_k\}}^{\dagger}
\cdot\mathbf{r}_{k-1}=\mathbf{{\Phi}}_{\{i_k,j_k\}}^{\dagger}\cdot
\mathbf{r}_{k-1}\\
&=
\begin{bmatrix} 1 & \mathbf{\mathbold{\phi}}_{i_k}^\top \mathbf{\mathbold{\phi}}_{j_k}\\ 
\mathbf{\mathbold{\phi}}_{j_k}^\top \mathbf{\mathbold{\phi}}_{i_k} & 1 \end{bmatrix}^{-1}
\begin{bmatrix} \mathbf{\mathbold{\phi}}_{i_k}^\top \\ \mathbf{\mathbold{\phi}}_{j_k}^\top \end{bmatrix}
\cdot\mathbf{r}_{k-1} \\
&=\frac{1}{1-|\mathbold{\phi}_{i_k}^\top \mathbold{\phi}_{j_k}|^2}
\begin{bmatrix} 1 & -\mathbf{\mathbold{\phi}}_{i_k}^\top \mathbf{\mathbold{\phi}}_{j_k}\\ 
-\mathbf{\mathbold{\phi}}_{j_k}^\top \mathbf{\mathbold{\phi}}_{i_k} & 1 \end{bmatrix}
\begin{bmatrix} \mathbf{\mathbold{\phi}}_{i_k}^\top \\ \mathbf{\mathbold{\phi}}_{j_k}^\top \end{bmatrix}
\cdot\mathbf{r}_{k-1} \\
&=\frac{1}{1-|\mathbold{\phi}_{i_k}^\top \mathbold{\phi}_{j_k}|^2}
\begin{bmatrix}
\mathbold{\phi}_{i_k}^\top-\mathbold{\phi}_{i_k}^\top\mathbold{\phi}_{j_k}\mathbold{\phi}_{j_k}^\top \\
\mathbold{\phi}_{j_k}^\top-\mathbold{\phi}_{j_k}^\top\mathbold{\phi}_{i_k}\mathbold{\phi}_{i_k}^\top 
\end{bmatrix}
\cdot\mathbf{r}_{k-1} 
=\frac{1}{1-|\mathbold{\phi}_{i_k}^\top \mathbold{\phi}_{j_k}|^2}
\begin{bmatrix}
\mathbold{\phi}_{i_k}^\top(I-\mathbold{\phi}_{j_k}\mathbold{\phi}_{j_k}^\top) \\
\mathbold{\phi}_{j_k}^\top(I-\mathbold{\phi}_{i_k}\mathbold{\phi}_{i_k}^\top)
\end{bmatrix}
\cdot\mathbf{r}_{k-1}.
\end{align*}
\noindent
Since matrix $\mathbf{{\Phi}}$ is Gaussian, and since each column $\mathbold{\phi}_i$ is normalized, 
we have for any $\epsilon>0$. there is $m_0$ such that $\|\mathbold{\phi}_{i_k}\mathbold{\phi}_{j_k}^\top\|_{\infty}\leq \epsilon$ 
for all $m\geq m_0$ with high probability. Therefore, we have 
%\end{document}

\begin{equation} 
\label{normestimate}
\|{\bf x}_k\|^2  \geq 
\|
\begin{bmatrix}
\mathbold{\phi}_{i_k}^\top(I-\mathbold{\phi}_{j_k}\mathbold{\phi}_{j_k}^\top) \\
\mathbold{\phi}_{j_k}^\top(I-\mathbold{\phi}_{i_k}\mathbold{\phi}_{i_k}^\top)
\end{bmatrix}
\cdot\mathbf{r}_{k-1}\|^2\geq \frac{1}{2}\cdot\|
\begin{bmatrix}
\mathbold{\phi}_{i_k}^\top \\
\mathbold{\phi}_{j_k}^\top
\end{bmatrix}
\cdot\mathbf{r}_{k-1}\|^2 
=\frac{1}{2}\cdot(|\mathbold{\phi}_{i_k}^\top \mathbf{r}_{k-1}|^2
+|\mathbold{\phi}_{j_k}^\top \mathbf{r}_{k-1}|^2) 
\end{equation}
for large $m$ with high probability. \\
\\
Now let us give an estimate of the right hand side of (\ref{normestimate}). Let $\mathbold{\phi}_p$ be the projection of $\mathbf{r}_{k-1}$ onto the 
hyperplane spanned by $\mathbold{\phi}_{i_k}$ 
and $\mathbold{\phi}_{j_k}$.  Letting the angle between $\mathbold{\phi}_{i_k}$ and $\mathbold{\phi}_{j_k}$ 
be $\theta$,  at 
least one of the quantity $|\mathbold{\phi}_{i_k}^\top \mathbf{r}_{k-1}|$ and 
$|\mathbold{\phi}_{j_k}^\top 
\mathbf{r}_{k-1}|$ are greater or equal to $|\mathbold{\phi}_{p}^\top 
\mathbf{r}_{k-1}|\cdot|\cos{\frac{\theta}{2}}|$. 
Without loss of generality, we can assume 
$|\mathbold{\phi}_{i_k}^\top \mathbf{r}_{k-1}|\geq |\mathbold{\phi}_{p}^\top 
\mathbf{r}_{k-1}|\cdot|\cos{\frac{\theta}{2}}|$. \\
Let $\mathbold{\phi}_{i_{max}}=\{\mathbold{\phi}_i: \max_{i}{|\mathbold{\phi}_{i}^\top\mathbf{r}_{k-1}|}\}$. 
We claim that we will have 
$|\mathbold{\phi}_{p}^\top \mathbf{r}_{k-1}|\geq |\mathbold{\phi}_{i_{max}}^\top \mathbf{r}_{k-1}|$. 
Otherwise $|\mathbold{\phi}_{i_{max}}^\top \mathbf{r}_{k-1}|>|\mathbold{\phi}_{p}^\top 
\mathbf{r}_{k-1}|$. Then we would have 
\begin{equation*}
\|\mathbf{Proj}_{(i_k,i_{max})}(\mathbf{r}_{k-1})\|\geq 
|\mathbold{\phi}_{i_{max}}^\top\mathbf{r}_{k-1}|>
|\mathbold{\phi}_{p}^\top\mathbf{r}_{k-1}|=\|\mathbf{Proj}_{(i_k,j_k)}(\mathbf{r}_{k-1})\|,
\end{equation*}
which contradicts the choice of the pair $(i_k,j_k)$. \\
\\
Notice that for $n>m$, we have $\mu({\bf \mathbf{\Phi}})=\max_{1\leq i,j\leq m, i
\neq j}|\mathbf{\mathbold{\phi}_i}^\top\mathbf{\mathbold{\phi}_j}|\geq \sqrt{\frac{n-m}{m(n-1)}}$ 
(see the previous subsection and also \cite{lowerboundmutualcoherence}). For the k-th iteration, the submatrix $\mathbf{{\Phi}}_{\mathcal{S}^c_{k-1}}$ is of size $m\times (n-2k+2)$. If $k\leq K$, then $\mu(\mathbf{{\Phi}}_{\mathcal{S}^c_{k-1}})\geq \sqrt{\frac{n-2k-m+2}{m(n-2k-1+2)}}\geq \sqrt{\frac{n-2K-m+2}{m(n-2K-1+2)}}$. Also note that by Lemma \ref{lem2}, we have $\mu(\mathbf{{\Phi}}_{\mathcal{S}^c_{k-1}})\leq \frac{1}{f(m)}$ for large $m$ with high probability, where $f(m)\to\infty$ as $m\to\infty$. Hence, for large $m$, we have
\begin{align*}
\frac{|\mathbold{\phi}_{i_{max}}^\top\mathbf{r}_{k-1}|^2}{ \|\mathbf{r}_{k-1}\|^2}=\mu^2([\mathbf{{\Phi}}_{\mathcal{S}^c_{k-1}},\mathbf{r}_{k-1}])\geq\mu^2(\mathbf{{\Phi}}_{\mathcal{S}^c_{k-1}})
\geq \frac{n-2K-m+2}{m(n-2K+1)}   
\end{align*}
with high probability. \\
\\
% as  the mutual coherance of  
%the augmented matrix $[{\bf \mathbold{\phi}}, {\bf r}_{k-1}/\|{\bf r}_{k-1}\|]$ is larger 
%or equal to $\sqrt{\frac{n+1-m}{mn}}$.
By combining the inequalities together, we have for large $m$
\begin{align} \label{chaininequality}
\|\mathbf{x}_k\|^2 &\geq\frac{1}{2}\cdot(|\mathbold{\phi}_{i_k}^\top \mathbf{r}_{k-1}|^2+|\mathbold{\phi}_{j_k}^\top \mathbf{r}_{k-1}|^2) \cr
&\geq \frac{1}{2}\cdot|\mathbold{\phi}_{i_k}^\top\mathbf{r}_{k-1}|^2  \cr
&\geq\frac{1}{2}\cdot|\mathbold{\phi}_{p}^\top\mathbf{r}_{k-1}|^2\cdot|\cos{\frac{\theta}{2}}|^2  \cr
&\geq\frac{1}{2}\cdot|\mathbold{\phi}_{i_{\max}}^\top\mathbf{r}_{k-1}|^2\cdot|\cos{\frac{\theta}{2}}|^2  \cr
&\geq\frac{1}{2}\cdot\frac{n-2K-m+2}{m(n-2K+1)}\cdot\|\mathbf{r}_{k-1}\|^2\cdot\frac{1+\cos{\theta}}{2}.  \cr
&\geq\frac{n-2K-m+2}{4m(n-2K+1)}\cdot\|\mathbf{r}_{k-1}\|^2\cdot(1-\mu).
\end{align}
with high probability. The last inequality holds because $|\cos{\theta}|\leq \mu$ and the half angle formula.  \\
By plugging (\ref{chaininequality}) back into (\ref{eqnnormresidual}), we have for $1\leq k\leq K$,
\begin{align}
\|\mathbf{r}_k\|^2 \leq \|\mathbf{r}_{k-1}\|^2-(1-\delta_2)\|\mathbf{x}_k\|^2 &\leq \|\mathbf{r}_{k-1}\|^2-(1-\delta_2)\cdot(1-\mu)\cdot\frac{n-2K-m+2}{4m(n-2K+1)}\|\mathbf{r}_{k-1}\|^2 \cr
&=(1-\frac{(1-\delta_2)(1-\mu)(n-2K-m+2)}{4m(n-2K+1)})\|\mathbf{r}_{k-1}\|^2.
\end{align}
for large $m$ with high probability.  \\
\\
Note that $\alpha<1$. Indeed, since the number of iterations will always be less than or equal to $m/2$ 
as the algorithm chooses two columns in every iteration and 
does not re-pick the same columns already  chosen from the previous iterations and since $n\geq 2m$, 
we have $n-2K-m+2>0$, i.e. $\alpha<1$. Hence, we choose this $\alpha$ to finish 
the proof.  
\end{proof}

%\begin{remark}
%\end{remark}

\section{Computational Complexity and Numerical Results}
\subsection{Computational Complexity of Algorithm~3}
The total computational complexity of QOMP algorithm is dominated by the complexity in the 
iteration steps. In each iteration, the step 
$\min_{u,v\in\mathbb{R}}\{\|\mathbold{\phi}_i u+\mathbold{\phi}_j v-\mathbf{r_{k-1}}\|\}$ 
requires $O(m)$ operations, and finding the minimum of 
$\min_{u,v\in\mathbb{R}}\{\|\mathbold{\phi}_i u+\mathbold{\phi}_j v-\mathbf{r_{k-1}}\|\}$ 
while $i,j$ runs through $1$ to $n$ requires $O(n^2)$ operations, 
and since the iteration runs from $1$ to $s$, we have the total complexity is approximately 
%suppose the number of columns of $\mathbf{\mathbold{\phi}}$ which has not been selected is $k$, then the algorithm will search for the maximal projection of the current residual measurement vector $r$ onto the plane which is generated by those $\binom{k}{2}$ pair of columns of $\mathbf{\mathbold{\phi}}$. To find the maximum element from a total number of $\binom{k}{2}$ items, we need at least $\binom{k}{2}$ operations. Therefore, the total complexity is approximately
\begin{align*}
O(m)\cdot O(n^2)\cdot s=O(m n^2 s),
\end{align*}
while the standard OMP algorithm has the complexity around $O(mns)$. \\
\\
However, when computing the 
$\min_{u,v\in\mathbb{R}}\{\|\mathbold{\phi}_i u+\mathbold{\phi}_j v-\mathbf{r_{k-1}}\|\}$, 
each of those 
minimizations is independent of one another 
when $i,j$ runs through $1$ to $n$, so we can use GPU or
parallel computing to improve the efficiency largely. 
In such a case, 
we can compute all the $\binom{n}{2}$ pairs of 
$\min_{u,v\in\mathbb{R}}\{\|\mathbold{\phi}_i u+\mathbold{\phi}_j v-\mathbf{r_{k-1}}\|\}$ simultaneously, which will reduce the total computational complexity to 
\begin{align*}
(O(m)+O(n^2))\cdot s=O(n^2 s).
\end{align*}
In the case that $m$ and $n$ are in the same scale, e.g. $n=2m$, the 
complexity is approximately $O(n^2 s)\approx O(mns)$, which is the same as OMP algorithm, and 
that is what we desired.

\subsection{Experimental Results}
As mentioned in the introduction, suppose the signal sparsity is $s$, then OMP algorithm can recover the signal within $s$ iterations if the restricted isometry constant of the sensing matrix satisfies certain condition. To investigate the performance of QOMP, we first compare the performance between QOMP and the standard OMP both within $s$ iterations for sensing matrix of size $32\times 128$. The frequency of exact recovery of 
each sparsity is computed by each method based on 1000 repetitions of solving a Gaussian random matrix of size $32\times 128$. We 
only show the results for $2\leq s<0.4m$ $(\approx 12)$ because the exact recovery rate is very low for both algorithm if $s\geq 
0.4m$, and to find an improvement for this range of $s$ is beyond the scope of this paper. As shown in Figure~\ref{fig1}, the QOMP 
algorithm has a much better than the standard OMP. 

\begin{figure}[htpb]
	\centering 
	\caption{Frequencies of exact reconstruction of signal within $s$ iterations for underdetermined linear systems of sizes $m\times n$ with $n=4m$. \label{fig1}}
	%\bigskip
	\includegraphics[width=0.5\textwidth]{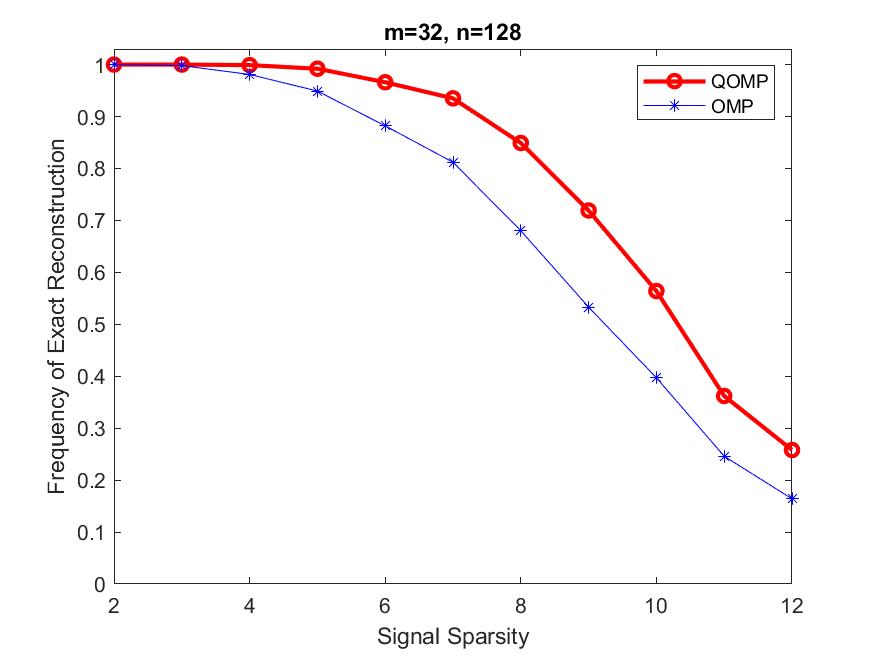}
\end{figure}

However, a total of $s$ iterations in OMP algorithm will only select $s$ different column indices, while in QOMP $2s$ column 
indices are selected. Hence we may want to do $2s$ iterations for OMP as well in order to keep the number of column indices to be 
the same. We also add in the GOMP algorithm into the comparison since QOMP can be considered as a generalization of GOMP with $N=2$
in the sense that QOMP will become GOMP if all the columns are orthogonal to each other.
See Figure~\ref{fig2} for comparison of the performance of QOMP within $s$ iterations, OMP within $2s$ iterations and GOMP (with 
$N=2$) within $2s$ iterations, so that the number of column indices are $2s$ for all three algorithms.
The frequency of exact recovery of each sparsity is computed by each method based on 1000 repetitions of solving a Gaussian random 
matrix of size $m\times n$ with $m=32$ and $n=2m$, $4m$, $6m$ and $8m$, respectively.   From Figure~\ref{fig2} we can see that for 
$n=2m$ the performance of QOMP has no advantage over the standard OMP and GOMP (with N=2). However, QOMP do have a better 
performance for $n=4m$, $6m$, and $8m$, especially for a bigger $n$. 
Empirically speaking, for sensing matrix satisfies $n\leq 2m$, the standard OMP algorithm performs very well if $s<0.2m$. However,  
The performance of OMP and GOMP drops dramatically if $n\geq 4m$. Nevertheless, QOMP has a better performance in this range roughly 
for $2\leq s < 0.4m$. For $s\geq 0.4m$, the frequency of exact recovery of these three algorithms are all very low and hence we d
did not present it in Figure~\ref{fig2}.

\begin{figure}[htpb]
  \centering
  \caption{Frequencies of exact reconstruction of signal within $2s$ iterations (OMP) and $s$ iterations (QOMP and GOMP) for underdetermined linear systems of sizes $m\times n$, where $m=32$, $n=2m$, $4m$, $6m$, $8m$ respectively. \label{fig2}}
%\bigskip
   \includegraphics[width=0.4\textwidth]{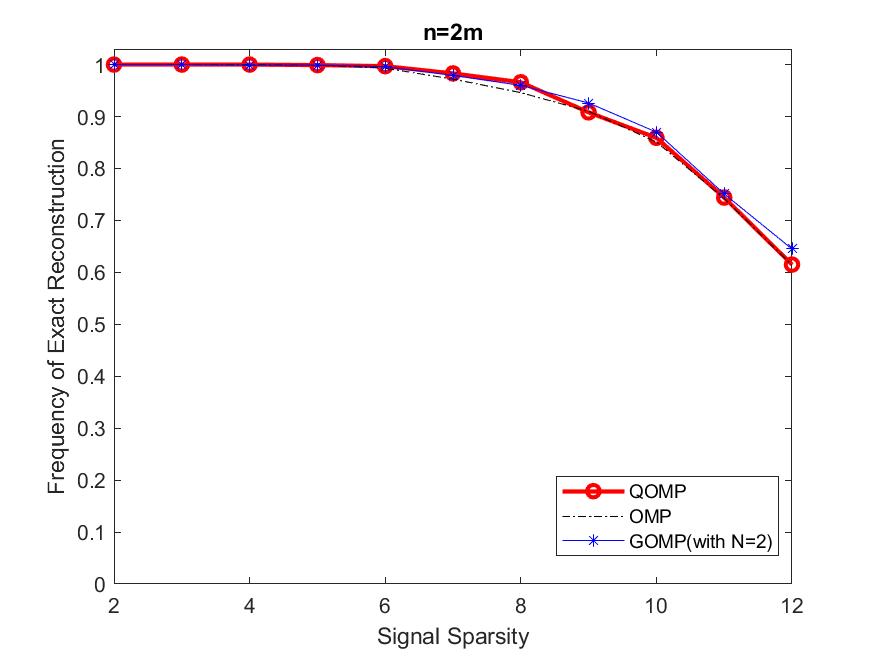}
   \includegraphics[width=0.4\textwidth]{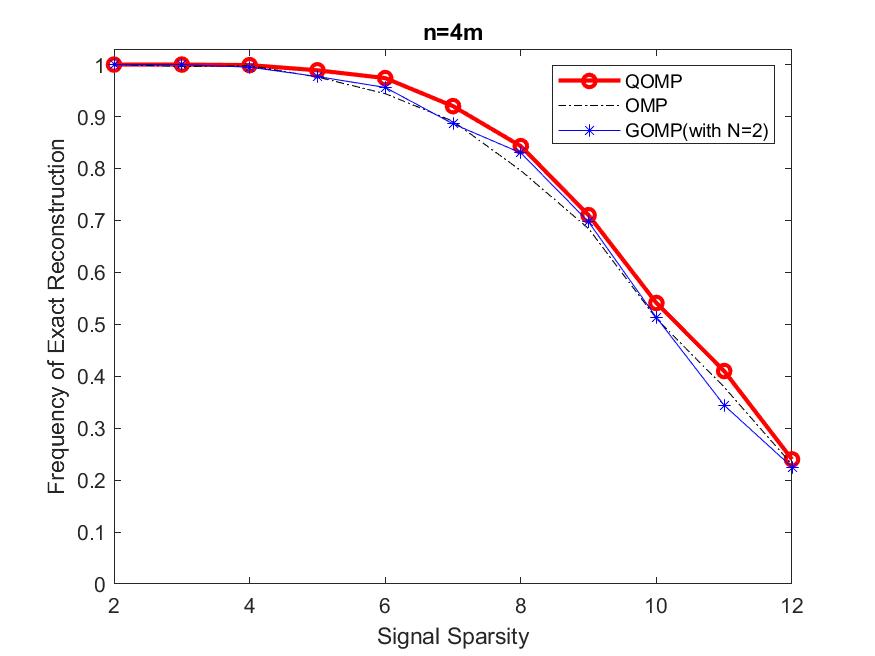}
   \\
   \includegraphics[width=0.4\textwidth]{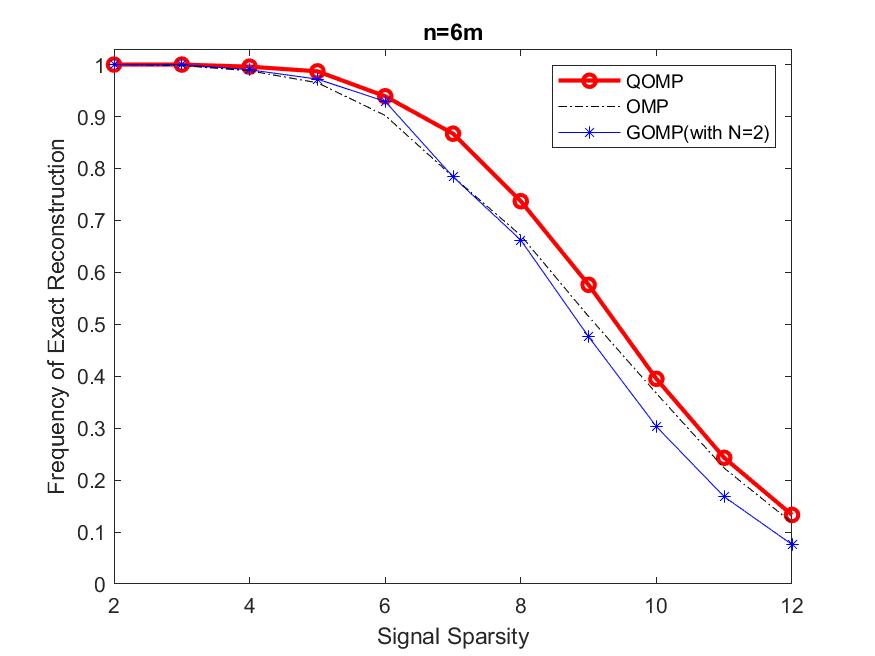}
   \includegraphics[width=0.4\textwidth]{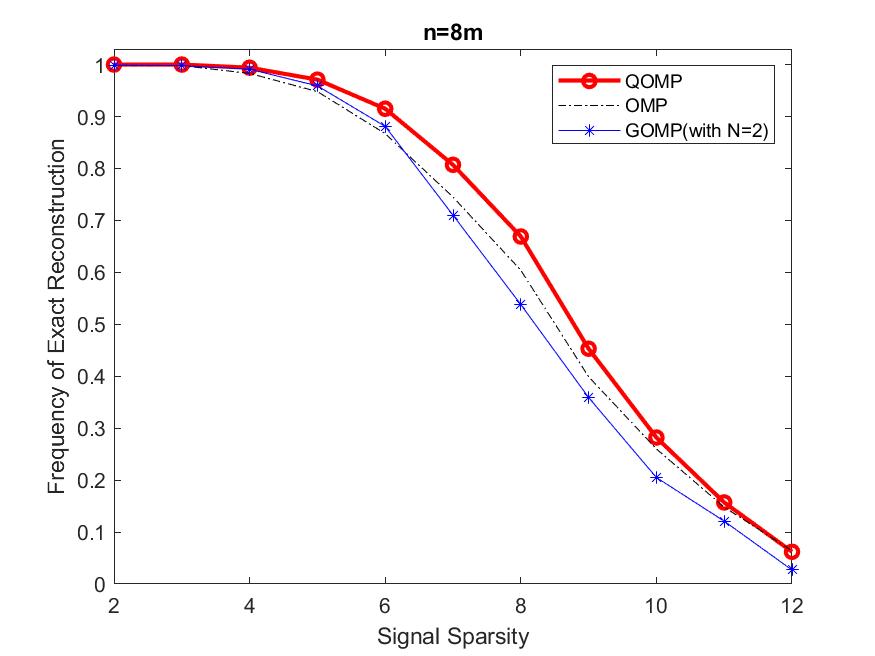}
\end{figure}

\section{Discussions and Future Research}
Future Research could be done to investigate the most optimal relationship between $m$ and $n$ to make QOMP the 
most effective against OMP or GOMP. As we see from our numerical results that a bigger $n$ seems to make QOMP more advantageous 
over OMP and GOMP, however, we are not very sure about the situation for an even bigger $n$ due to the limit of computing power. 

A natural generalization of QOMP algorithm is, in each iteration, to select a $k$-tuple of 
columns of matrix $\mathbf{{\Phi}}\in\mathbb{R}^{m\times n}$ which maximizes the projection of the 
current measurement vector onto the hyperplane which is generated by the most optimal $k$-tuple 
from a total number of $\binom{n}{k}$ $k$-tuples. In this case, if we need $s$ iterations to 
reconstruct the signal $\mathbf{x}$, then a total number of $ks$ columns will be selected after 
$s$ iterations. %As shown in \cite{WangHowmanyiterations}, approximately $\left\lceil 
%2.8s\right\rceil$ iterations will be enough to guarantee the exact recovery in the OMP algorithm 
%under some RIP condition of the sensing matrix. 
The accuracy of QOMP may or may not go up as $k$ 
increases, however, the computational complexity will increase largely if one increases $k$. For 
example, when $k=3$, the complexity of QOMP becomes $O(mn^3s)$, and even in the case when parallel 
computing or GPU is applied, we can only be able to reduce it to $O(n^3s)$, which is still not optimal compared with the 
standard OMP. 

We showed that if the mutual coherence of sensing matrix satisfies certain conditions, then the total iterations needed to exactly 
recover the $s$-sparse singal $\mathbf{x}$ is $s$. Further research could be done on investigating the other conditions we need to 
impose in order to reduce the number of iterations in QOMP (though the best we can hope is 
$\left\lceil\frac{s}{2} \right\rceil$ iterations). However, in our simulation, almost always 
$\left\lceil\frac{s}{2} \right\rceil$ number of iterations is not enough to guarantee the exact 
reconstruction of $\mathbf{x}$, therefore we expect that the conditions be imposed on 
$\mathbf{\Phi}$ would be quite demanding.

\section{Appendix A: Proof of Lemma \ref{lem1}}
Before proving Lemma \ref{lem1}, let us state some standard results in probability theory, the proofs of Lemma \ref{lem3}, \ref{lem4} and \ref{lem5} can all be referred to \cite{KallenbergProbability}.
\begin{lemma} \label{lem3}
For any $p>0$ and random variables $\xi\geq 0$,
\begin{align*}
\bfE(\xi^p)=p\int_0^{\infty}P(\xi>t)t^{p-1}dt=p\int_0^{\infty}P(\xi\geq t)t^{p-1}dt
\end{align*}
\end{lemma}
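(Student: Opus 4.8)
The plan is to reduce the statement to the pointwise calculus identity $\xi^p = p\int_0^{\xi} t^{p-1}\,dt$ and then interchange the expectation with the $dt$-integral by Tonelli's theorem.

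First I would fix $p>0$ and write, for each sample point,
\[
\xi^p = p\int_0^{\xi} t^{p-1}\,dt = p\int_0^{\infty} t^{p-1}\,\mathbf{1}_{\{t<\xi\}}\,dt,
\]
which holds for every value $\xi\geq 0$ (the case $\xi=0$ being trivial). Next I would take expectations of both sides. Since $\xi$ is measurable, the set $\{(t,\omega):t<\xi(\omega)\}$ belongs to the product $\sigma$-algebra on $(0,\infty)\times\Omega$, so the integrand $(t,\omega)\mapsto t^{p-1}\mathbf{1}_{\{t<\xi(\omega)\}}$ is jointly measurable and nonnegative; Tonelli's theorem then permits swapping the two integrations, giving
\[
\bfE(\xi^p) = p\int_0^{\infty} t^{p-1}\,\bfE\!\left(\mathbf{1}_{\{t<\xi\}}\right)\,dt = p\int_0^{\infty} t^{p-1}\,P(\xi>t)\,dt,
\]
which is the first claimed equality.

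For the second equality I would observe that $P(\xi\geq t)-P(\xi>t)=P(\xi=t)$ and that the set of $t>0$ with $P(\xi=t)>0$ (the atoms of the law of $\xi$) is at most countable, hence of Lebesgue measure zero; therefore $P(\xi>t)=P(\xi\geq t)$ for almost every $t$, and the two integrals coincide. Alternatively one can rerun the layer-cake computation starting from $\xi^p = p\int_0^{\infty} t^{p-1}\,\mathbf{1}_{\{t\leq\xi\}}\,dt$, whose integrand differs from the previous one only on a set whose $t$-sections are Lebesgue-null.

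The one point that genuinely needs care is the interchange of $\bfE$ and $\int_0^{\infty}(\cdot)\,dt$; because every quantity in sight is nonnegative this is the hypothesis-free Tonelli theorem rather than Fubini, so no integrability assumption on $\xi$ is needed and the identity holds for all $p>0$ with both sides possibly $+\infty$ (see \cite{KallenbergProbability}). I expect this measurability/interchange step to be the main --- and essentially the only --- obstacle, the remainder being the elementary identity above together with the countable-atoms remark.
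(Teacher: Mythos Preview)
Your argument is correct and is precisely the standard layer-cake/Tonelli proof; the paper itself does not give a proof of this lemma but simply refers the reader to \cite{KallenbergProbability}, where the same approach appears. There is nothing to add.
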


\begin{lemma}(Three-series criterion, Kolmogorov, Levy) \label{lem4}
Let $\xi_1,\xi_2,\cdots$ be independent random variables. Then $\sum_n \xi_n$ converges a.s. if and only if it converges in distribution and also if and only if these conditions are fulfilled:
\begin{itemize}
    \item $\sum_n P(|\xi_n|>1)<\infty$;
    \item $\sum_n E[\xi_n;|\xi_n|\leq 1]$ converges;
    \item $\sum_n Var[\xi;|\xi_n|\leq 1]<\infty$.
\end{itemize}
\end{lemma}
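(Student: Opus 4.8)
The plan is to prove the three asserted equivalences by the classical route: first that the three conditions imply almost sure convergence of $\sum_n \xi_n$, then that a.s. convergence implies the three conditions, and finally that convergence in distribution of the partial sums is equivalent to a.s. convergence because for independent summands the modes of convergence coincide. Throughout I would truncate at level $1$ by setting $\eta_n = \xi_n \mathbf{1}_{\{|\xi_n| \le 1\}}$, so that $|\eta_n| \le 1$ and the expressions in the second and third bullet points are exactly $\bfE(\eta_n)$ and $\mathrm{Var}(\eta_n)$; the moment and tail estimates needed for the truncation can be supplied by the tail-integral formula of Lemma \ref{lem3}.

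For the sufficiency direction I would first use the first condition $\sum_n P(|\xi_n| > 1) < \infty$ together with the Borel--Cantelli lemma to conclude that $\xi_n = \eta_n$ for all but finitely many $n$ almost surely; hence $\sum_n \xi_n$ and $\sum_n \eta_n$ converge or diverge together, and it suffices to treat the truncated series. The third condition gives $\sum_n \mathrm{Var}(\eta_n) < \infty$, so Kolmogorov's maximal inequality applied to the centered sums $\sum_{k \le N}(\eta_k - \bfE(\eta_k))$ shows that these partial sums are a.s. Cauchy (this is Kolmogorov's one-series convergence theorem), whence $\sum_n (\eta_n - \bfE(\eta_n))$ converges a.s. The second condition gives convergence of $\sum_n \bfE(\eta_n)$, and adding the two series yields a.s. convergence of $\sum_n \eta_n$, hence of $\sum_n \xi_n$.

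The necessity direction is where the real work lies and is the main obstacle. Assuming $\sum_n \xi_n$ converges a.s., the terms satisfy $\xi_n \to 0$ a.s., so $\{|\xi_n| > 1\}$ occurs only finitely often; since the events are independent, the second Borel--Cantelli lemma forces $\sum_n P(|\xi_n| > 1) < \infty$, giving the first condition and again reducing everything to $\eta_n$. To extract the variance condition I would symmetrize: let $\eta_n'$ be an independent copy of $\eta_n$ and set $\tilde\eta_n = \eta_n - \eta_n'$, so that $\tilde\eta_n$ is symmetric, bounded by $2$, and has $\mathrm{Var}(\tilde\eta_n) = 2\,\mathrm{Var}(\eta_n)$. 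The symmetrized series $\sum_n \tilde\eta_n$ converges a.s., and the \emph{converse} (lower-bound) form of Kolmogorov's inequality for uniformly bounded independent summands then forces $\sum_n \mathrm{Var}(\tilde\eta_n) < \infty$, hence $\sum_n \mathrm{Var}(\eta_n) < \infty$, which is the third condition. Feeding this back into the one-series convergence theorem makes $\sum_n (\eta_n - \bfE(\eta_n))$ converge a.s.; subtracting from the a.s.-convergent $\sum_n \eta_n$ shows $\sum_n \bfE(\eta_n)$ converges, which is the second condition. The delicate point is precisely the converse Kolmogorov inequality combined with the symmetrization device, since a naive second-moment estimate yields only an upper bound and cannot by itself force summability of the variances.

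Finally, the equivalence with convergence in distribution follows because for sums of independent terms the modes of convergence collapse. I would invoke the L\'evy inequalities (equivalently the Ottaviani inequality) to upgrade convergence in distribution of the partial sums $S_N = \sum_{k \le N} \xi_k$ to convergence in probability, and then a subsequence argument combined with the maximal inequality to reach a.s. convergence; the reverse implication, that a.s. convergence implies convergence in distribution, is immediate. Chaining the three implications closes the loop and establishes all the stated equivalences, and for a fully detailed treatment I would refer to \cite{KallenbergProbability}.
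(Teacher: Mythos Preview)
Your sketch is correct and follows the classical route---truncation plus Borel--Cantelli for the first condition, Kolmogorov's one-series theorem for sufficiency, symmetrization combined with the converse Kolmogorov inequality for necessity of the variance condition, and L\'evy's equivalence theorem for the upgrade from convergence in distribution to almost sure convergence. This is exactly the argument one finds in \cite{KallenbergProbability}.

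The paper, however, does not supply any proof of this lemma at all: it simply states that the proofs of Lemmas \ref{lem3}, \ref{lem4} and \ref{lem5} ``can all be referred to \cite{KallenbergProbability}.'' So there is nothing to compare at the level of technique---you have written out in detail what the paper merely cites. Your final sentence already anticipates this by pointing to \cite{KallenbergProbability} for a fully detailed treatment, which aligns with the paper's own stance.
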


\begin{lemma}(Kronecker) \label{lem5}
If $\sum_n n^{-c}a_n$ converges for some $a_1,a_2,\cdots\in\mathbb{R}$ and $c>0$, then $n^{-c}\sum_{k\leq n}a_k\to 0$.
\end{lemma}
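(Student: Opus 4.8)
The plan is to convert the orthogonal decomposition (\ref{decrease}) into a geometric-decay estimate by lower bounding, at each step, the energy captured by the two-column projection. Since $\mathbf{r}_k$ is orthogonal to $\mathbf{Proj}_{\{i_k,j_k\}}(\mathbf{r}_{k-1})$, establishing (\ref{rate}) is equivalent to showing $\|\mathbf{Proj}_{\{i_k,j_k\}}(\mathbf{r}_{k-1})\|^2 \ge (1-\alpha)\|\mathbf{r}_{k-1}\|^2$ for some $\alpha\in(0,1)$. First I would write the projection as $\mathbf{\Phi}_{\{i_k,j_k\}}\mathbf{x}_k$ with $\mathbf{x}_k=\mathbf{\Phi}_{\{i_k,j_k\}}^{\dagger}\mathbf{r}_{k-1}$ and invoke the RIP of order $2$ to get $\|\mathbf{Proj}_{\{i_k,j_k\}}(\mathbf{r}_{k-1})\|^2\ge(1-\delta_2)\|\mathbf{x}_k\|^2$. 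This reduces the problem to producing a bound $\|\mathbf{x}_k\|^2\ge c\,\|\mathbf{r}_{k-1}\|^2$ with an explicit constant $c>0$.

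To bound $\|\mathbf{x}_k\|$ from below I would compute it explicitly from the $2\times2$ Gram inverse of the normalized pair $(\mathbold{\phi}_{i_k},\mathbold{\phi}_{j_k})$. Because $\mathbf{\Phi}$ is Gaussian, Lemma~\ref{lem2} guarantees that the off-diagonal inner product $\mathbold{\phi}_{i_k}^\top\mathbold{\phi}_{j_k}$ is tiny for large $m$ with high probability, so the rank-one correction terms $\mathbold{\phi}_{i_k}\mathbold{\phi}_{j_k}^\top$ are negligible in operator norm. Controlling these corrections yields $\|\mathbf{x}_k\|^2\ge\tfrac12\big(|\mathbold{\phi}_{i_k}^\top\mathbf{r}_{k-1}|^2+|\mathbold{\phi}_{j_k}^\top\mathbf{r}_{k-1}|^2\big)$, so it suffices to show that the better of the two single-column correlations of the chosen pair is a definite fraction of $\|\mathbf{r}_{k-1}\|$.

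The heart of the argument is geometric and relies on the greedy selection rule. Letting $\mathbold{\phi}_p$ be the unit vector in the plane aligned with $\mathbf{Proj}_{\{i_k,j_k\}}(\mathbf{r}_{k-1})$, so that $|\mathbold{\phi}_p^\top\mathbf{r}_{k-1}|=\|\mathbf{Proj}_{\{i_k,j_k\}}(\mathbf{r}_{k-1})\|$, and $\theta$ the angle between $\mathbold{\phi}_{i_k}$ and $\mathbold{\phi}_{j_k}$, a half-angle estimate shows at least one of $|\mathbold{\phi}_{i_k}^\top\mathbf{r}_{k-1}|,|\mathbold{\phi}_{j_k}^\top\mathbf{r}_{k-1}|$ is at least $|\mathbold{\phi}_p^\top\mathbf{r}_{k-1}|\,|\cos\frac{\theta}{2}|$, with $|\cos\frac{\theta}{2}|^2=(1+\cos\theta)/2\ge(1-\mu)/2$ since $|\cos\theta|\le\mu$. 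The key claim is that the optimality of $(i_k,j_k)$ forces $|\mathbold{\phi}_p^\top\mathbf{r}_{k-1}|\ge|\mathbold{\phi}_{i_{\max}}^\top\mathbf{r}_{k-1}|$, where $i_{\max}$ maximizes the single-column correlation; otherwise pairing $i_k$ with $i_{\max}$ would produce a strictly larger projection, contradicting the choice of the pair. Finally I would bound $|\mathbold{\phi}_{i_{\max}}^\top\mathbf{r}_{k-1}|/\|\mathbf{r}_{k-1}\|$ from below by the Welch lower bound on the mutual coherence of the surviving submatrix $\mathbf{\Phi}_{\mathcal{S}^c_{k-1}}$, yielding the factor $\sqrt{(n-2K-m+2)/(m(n-2K+1))}$.

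Chaining these inequalities produces the explicit rate $\alpha=1-\frac{(1-\delta_2)(1-\mu)(n-2K-m+2)}{4m(n-2K+1)}$, and the constraints $n\ge2m$ and $K\le m/2$ guarantee $n-2K-m+2>0$, so $\alpha<1$. The main obstacle I anticipate is making the ``negligible correction'' step and the Welch-bound step hold simultaneously with high probability: the former needs near-orthonormality of the Gaussian columns via Lemma~\ref{lem2}, while the latter uses the mutual coherence as a \emph{lower} bound on the largest correlation, and both must hold on the same event for large $m$. The geometric claim linking the chosen pair to the single-column maximizer is the most delicate logical step, since it is what guarantees strict progress rather than merely the monotone decrease already implied by (\ref{decrease}).
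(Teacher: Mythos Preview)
Your proposal does not address the stated lemma at all. Lemma~\ref{lem5} is Kronecker's lemma: given convergence of the weighted series $\sum_n n^{-c}a_n$, conclude that $n^{-c}\sum_{k\le n}a_k\to 0$. This is a purely real-analysis statement about sequences and has nothing to do with sensing matrices, RIP constants, projections, mutual coherence, or Algorithm~3. What you have written is a sketch of the proof of Theorem~\ref{lemma-descrease} (the linear decay of the QOMP residuals), and indeed it tracks that proof in the paper rather closely; but as a proof of Lemma~\ref{lem5} it is simply off-target.

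For the record, the paper does not supply its own proof of Lemma~\ref{lem5}; it defers to Kallenberg's textbook. A correct proof is the classical Abel/summation-by-parts argument: set $b_n=n^{-c}a_n$ and $B_n=\sum_{k\le n}b_k$, so $B_n\to B$ for some finite $B$. Then $a_k=k^{c}b_k$ and
\[
\sum_{k\le n}a_k=\sum_{k\le n}k^{c}(B_k-B_{k-1})=n^{c}B_n-\sum_{k\le n-1}(\,(k+1)^{c}-k^{c}\,)B_k.
\]
Dividing by $n^{c}$ and using that the weights $(k+1)^{c}-k^{c}$ are nonnegative with $\sum_{k\le n-1}((k+1)^{c}-k^{c})=n^{c}-1$, one sees that $n^{-c}\sum_{k\le n}a_k$ equals $B_n$ minus a weighted average of $B_0,\dots,B_{n-1}$ whose weights sum to $1-n^{-c}$; since $B_k\to B$, both terms tend to $B$ and the difference tends to $0$. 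None of the machinery you invoked (RIP, Welch bound, Gaussian concentration) is relevant here.
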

Now let us prove Lemma \ref{lem1} by using the above lemmas.
\begin{proof}
\emph{(of Lemma \ref{lem1})} Assume that $\bfE|\xi|^p\leq\infty$ and for $p\geq 1$ that even $\bfE\xi=0$. Define $\xi'_n=\xi_n 1_{\{|\xi_n|\leq n^{1/p}\}}$, and note that by Lemma \ref{lem3},
\begin{align*}
\sum_n P(\xi'_n\neq \xi_n)=\sum_n P(|\xi|^p>n)\leq \int_{0}^{\infty} P(|\xi|^p>t)dt=\bfE|\xi|^p<\infty.
\end{align*}
By the Borel-Cantelli lemma we get $P(\xi'_n\neq \xi, \text{i.o.})=0$, and so $\xi'_n=\xi_n$ for all but finitely many $n\in\mathbb{N}$ a.s.. It is then equivalent to show that $n^{-1/p}\sum_{k\leq n}\xi'_k\to 0$ a.s. By Lemma \ref{lem5} it suffices to prove instead that $\sum_n n^{-1/p}\xi'_n$ converges almost surely. \\
For $p<1$, this is clear if we write 
\begin{align*}
\bfE\Big(\sum_n n^{-1/p}|\xi'_n|\Big)&=\sum_n n^{-1/p}\bfE\big[|\xi|;|\xi|\leq n^{1/p}\big] \\
&\leq \int_0^{\infty}t^{-1/p}\bfE\big[|\xi|;|\xi|\leq t^{1/p}\big]dt \\
&=\bfE\big[|\xi|\cdot\int_{|\xi|^p}^{\infty}t^{-1/p}dt\big] \\
&\leq \bfE|\xi|^p<\infty.
\end{align*} 
If instead $p>1$, it suffices by Lemma \ref{lem4} to prove that $\sum_n n^{-1/p}\bfE(\xi'_n)$ converges and $\sum_n n^{-2/p}Var(\xi'_n)\leq\infty$.
Since $\bfE(\xi'_n)=-\bfE\big[\xi;|\xi|>n^{1/p}\big]$, we have for the former series 
\begin{align*}
\sum_n n^{-1/p}|\bfE(\xi'_n)|&\leq \sum_n n^{-1/p}\bfE\big[|\xi|;|\xi|>n^{1/p}\big] \\
&\leq \int_{0}^{\infty} t^{-1/p\bf}E\big[|\xi|;|\xi|>t^{1/p}\big]dt \\
&=\bfE\big[|\xi|\cdot\int_0^{|\xi|^p}t^{-1/p}dt\big] \\
&\leq \bfE|\xi|^p<\infty.
\end{align*}
As for the latter series, we get 
\begin{align*}
\sum_n n^{-2/p}Var(\xi'_n)&\leq \sum_n n^{-2/p}\bfE(\xi'_n)^2 \\
&=\sum_n n^{-2/p}\bfE\big[\xi^2;|\xi|\leq n^{1/p}\big] \\
&\leq \int_0^{\infty}t^{-2/p}\bfE\big[\xi^2;|\xi|\leq t^{1/p}\big]dt \\
&=\bfE\big[\xi^2\cdot\int_{|\xi^p|}^{\infty}t^{-2/p}dt\big]
\\
&\leq \bfE|\xi|^p<\infty.
\end{align*}
If $p=1$, then $\bfE(\xi'_n)=\bfE\big[\xi;|\xi|\leq n\big]\to 0$ by dominated convergence. Thus, $n^{-1}\sum_{k\leq n}\bfE(\xi'_k)\to 0$, and we may prove instead that $n^{-1}\sum_{k\leq n}\xi''_k\to 0$ a.s., where $\xi''_n=\xi'_n-\bfE(\xi'_n)$. By Lemma \ref{lem5} and Lemma \ref{lem4} it is then enough to show that $\sum_n n^{-2}Var(\xi'_n)<\infty$, which may been seen as before.  \\
Conversely, assume that $n^{-1/p}S_n=n^{-1/p}\sum_{k\leq n}\xi_k$ converges a.s.. Then
\begin{align*}
\frac{\xi_n}{n^{1/p}}=\frac{S_n}{n^{1/p}}-(\frac{n-1}{n})^{1/p}\cdot\frac{S_{n-1}}{(n-1)^{1/p}}\to 0 
\end{align*}
almost surely, and in particular $P(|\xi_n|^p>n, \text{i.o.})=0$. Hence, by Lemma \ref{lem3} and the Borel-Cantelli lemma,
\begin{align*}
\bfE|\xi|^p=\int_0^{\infty}P(|\xi|^p)dt\leq 1+\sum_{n\geq 1}P(|\xi|^p>n)<\infty.
\end{align*}
For $p>1$, the direct assertion yields $n^{-1/p}(S_n-n\bfE(\xi))\to 0$ a.s., and so $n^{1-1/p}\bfE(\xi)$ converges, which implies $\bfE(\xi)=0$.
\end{proof}

\end{document}